\documentclass[12pt]{amsart}
  \overfullrule=5pt
\usepackage{calc,amssymb,amsthm,amsmath, ulem, fullpage}
\usepackage{alltt}

\RequirePackage[dvipsnames,usenames]{color}  


\normalem
\input xy
\xyoption{all}

\DeclareFontFamily{OMS}{rsfs}{\skewchar\font'60}
\DeclareFontShape{OMS}{rsfs}{m}{n}{<-5>rsfs5 <5-7>rsfs7 <7->rsfs10 }{}
\DeclareSymbolFont{rsfs}{OMS}{rsfs}{m}{n}
\DeclareSymbolFontAlphabet{\scr}{rsfs}

\newtheorem{theorem}{Theorem}[section]
\newtheorem{lemma}[theorem]{Lemma}
\newtheorem{proposition}[theorem]{Proposition}
\newtheorem{corollary}[theorem]{Corollary}

\theoremstyle{definition}
\newtheorem{definition}[theorem]{Definition}

\theoremstyle{remark}
\newtheorem{remark}[theorem]{Remark}

\newtheorem{question}[theorem]{Question}

\DeclareMathOperator{\Spec}{{Spec}}
\newcommand{\ba}{\mathfrak{a}}

\newcommand{\bm}{\mathfrak{m}}
\newcommand{\bN}{\mathbb{N}}
\newcommand{\bZ}{\mathbb{Z}}
\DeclareMathOperator{\Hom}{Hom}

\newcommand{\blank}{\underline{\hskip 10pt}}
\newcommand{\bQ}{\mathbb{Q}}
\DeclareMathOperator{\Image}{Image}

\DeclareMathOperator{\Supp}{{Supp}}
\newcommand{\tensor}{\otimes}
\newcommand{\tld}{\widetilde }
\newcommand{\bR}{\mathbb{R}}

\theoremstyle{definition}
\newtheorem{definition-proposition}[theorem]{Definition-Proposition}


\renewcommand{\to}[1][]{\xrightarrow{\ #1\ }}

\newcommand{\Frob}[2]{{#1}^{1/p^{#2}}}
\newcommand{\Frobp}[2]{{(#1)}^{1/p^{#2}}}

\renewcommand{\O}{\scr O}


\setcounter{tocdepth}{1}

\begin{document}

\title{A refinement of sharply $F$-pure and strongly $F$-regular pairs}
\author{Karl Schwede}

\address{Department of Mathematics\\ University of Michigan\\ East Hall
530 Church Street \\ Ann Arbor, Michigan, 48109}
\email{kschwede@umich.edu}

\subjclass[2000]{13A35, 14B05}
\keywords{tight closure, test ideal, strongly $F$-regular, sharply $F$-pure, singularities of pairs}

\thanks{The author was partially supported by a National Science Foundation postdoctoral fellowship.}

\begin{abstract}
We point out that the usual argument used to prove that $R$ is strongly $F$-regular if and only if $R_{Q}$ is strongly $F$-regular for every prime ideal $Q \in \Spec R$, does not generalize to the case of pairs $(R, \ba^t)$.  The author's definition of sharp $F$-purity for pairs $(R, \ba^t)$ suffers from the same defect.  We therefore propose different definitions of sharply $F$-pure and strongly $F$-regular pairs.  Our new definitions agree with the old definitions in several common contexts, including the case that $R$ is a local ring.

\end{abstract}
\maketitle
\numberwithin{equation}{theorem}
\section{Introduction}

The notion of a strongly $F$-regular ring was introduced by Hochster and Huneke in \cite{HochsterHunekeFRegularityTestElementsBaseChange} because it was easily seen to be well behaved with respect to localization (this is in contrast to weak $F$-regularity).  It later was discovered that strongly $F$-regular rings (in characteristic $p > 0$) were closely related to rings with Kawamata log terminal singularities (in characteristic $0$), see \cite{HaraWatanabeFRegFPure}, \cite{HaraRatImpliesFRat} and \cite{TakagiInterpretationOfMultiplierIdeals}.  However, the notion of Kawamata log terminal singularities extends to pairs $(R, \ba^t)$ where $\ba \subseteq R$ is an ideal and $t > 0$ is a real number.  Therefore, it was natural to ask whether there is an analogous notion of strong $F$-regularity for pairs $(R, \ba^t)$.

In \cite{TakagiInversion}, Takagi gave such a definition and proved that it satisfied many properties similar to Kawamata log terminal singularities (also see \cite{TakagiWatanabeFPureThresh} and \cite{HaraWatanabeFRegFPure}).  In fact, by using this characteristic $p > 0$ definition, Takagi was able to prove remarkable results in characteristic zero for which there are still no known characteristic zero proofs, see for example \cite[Theorem 4.1]{TakagiInversion}.   We now state this definition:
\begin{definition}
\label{DefnOldStrongFReg}
 Suppose that $R$ is an $F$-finite reduced ring in characteristic $p > 0$, $\ba \subseteq R$ is an ideal and $t > 0$ is a real number.  Then we say that the pair $(R, \ba^t)$ is \emph{strongly $F$-regular} if, for every $d \in R^{\circ}$, there exists an integer $e > 0$ and an element $a \in \ba^{\lceil t(p^e - 1) \rceil}$ such that the inclusion $
 \xymatrix@R=0pt{
R \ar@{^{(}->}[r] & R^{1/p^e}, \text{defined by }
  1 \ar@{|->}[r] & (da)^{1/p^e}
 }$
  splits as a map of $R$-modules.
\end{definition}
\hskip -12pt The author of this note also defined a notion, for pairs, which he called sharp $F$-purity $(R, \ba^t)$, see Definition
\ref{DefnOldSharplyFPure}.  The reader should also compare with the notion of $F$-purity for pairs as defined in \cite{TakagiInversion}, \cite{HaraWatanabeFRegFPure} and \cite{TakagiWatanabeFPureThresh}.  Roughly speaking, $(R, \ba^t)$ is sharply $F$-pure if it satisfies the condition used to define strongly $F$-regular pairs in the case that $d = 1$; see \cite{SchwedeSharpTestElements} for details.

Takagi's definition of strongly $F$-regular pairs and the author's definition sharply $F$-pure pairs both work extremely well in the case that $R$ is a local ring.  Furthermore, strongly $F$-regular pairs have been studied largely in that context.
However, there are certain ways in which both definitions are unsatisfactory in the case that $R$ is a non-local ring.  For example, the author expects that $(R, \ba^t)$ being strongly $F$-regular (respectively, sharply $F$-pure) is a different condition than the localized pair $(R_{\bm}, \ba_{\bm}^t)$ being strongly $F$-regular (respectively sharply $F$-pure) for every maximal ideal $\bm$ of $\Spec R$.  On the other hand, in the classical non-pair setting, $R$ is strongly $F$-regular if and only if $R_{\bm}$ is strongly $F$-regular for every maximal ideal of $\Spec R$.  Note that Hara and Watanabe's definition of strong $F$-regularity, see \cite{HaraWatanabeFRegFPure}, does not suffer from this issue.\footnote{However, I do not know if Hara and Watanabe's definition of an $F$-pure pair $(R, \Delta)$ can be characterized locally.  The obstacle is a different one than we discuss below.}

Therefore, the main purpose of this paper is to state a refined definition of strong $F$-regularity and sharp $F$-purity for pairs, which satisfies the above localization criterion.  Our new definition for strong $F$-regularity is stated below.

\begin{definition}
\label{DefnNewStrongFReg}
 Suppose that $R$ is an $F$-finite reduced ring in characteristic $p > 0$, $\ba \subseteq R$ is an ideal and $t > 0$ is a real number.
 Then we say that the pair $(R, \ba^t)$ is \emph{locally strongly $F$-regular} if, for every $d \in R^{\circ}$, there exists an $e > 0$, and a map
\[
\phi \in \Hom_R(R^{1/p^e}, R) \cdot \left( d \ba^{\lceil t(p^e - 1) \rceil} \right)^{1/p^e}
\]
such that $\phi(1) = 1$ (or equivalently, that $\phi$ is surjective).
\end{definition}

The point is that the $\phi$ in Definition \ref{DefnNewStrongFReg} might be equal to a sum
\[
\phi(\blank) = \sum \phi_i(d a_i^{1/p^e} \blank)
\]
for $\phi_i \in \Hom_R(R^{1/p^e}, R)$ and $a_i \in \ba^{\lceil t(p^e - 1) \rceil}$, whereas in Definition \ref{DefnOldStrongFReg}, one would only consider sums with a single term.  In particular, a ``strongly $F$-regular'' pair is ``locally strongly $F$-regular''. We also state a better version of sharp $F$-purity for pairs, see on \ref{DefnBetterDefn}.  In fact, we state these definitions in greater generality: we state them for triples $(R, \Delta, \ba^t)$ where $\Delta$ is an effective $\bQ$-divisor on $X = \Spec R$.

For a pair $(R, \ba^t)$, Definition \ref{DefnOldStrongFReg} and Definition \ref{DefnNewStrongFReg} are equivalent under any of the following conditions (likewise for sharply $F$-pure pairs):
\begin{itemize}
 \item[(i)]  $R$ is a local ring, or
  \item[(ii)]  $R$ is an $\bN$-graded ring, $\ba$ is a graded ideal and $\Delta = 0$ or
 \item[(iii)]  $\ba$ is a principal ideal, or
 \item[(iv)]  $\Hom_R(R^{1/p^e}, R)$ is a free $R^{1/p^e}$-module for some $e$ greater than zero.  (This occurs, for example, if $R$ is sufficiently local and $\bQ$-Gorenstein with index not divisible by $p$).
\end{itemize}
It follows from (i) that Definition \ref{DefnNewStrongFReg} is equivalent to requiring that Definition \ref{DefnOldStrongFReg} holds after localizing at every maximal ideal, see Corollary \ref{CorNewAndOldAreRelated}.

This note also corrects a minor misstatement in the author's paper, \cite[Corollary 4.6]{SchwedeCentersOfFPurity}, where the author assumed that strong $F$-regularity for pairs was characterized locally.  See Remark \ref{RemErrorCorrection} for details.

Throughout this paper, all rings will be assumed to be commutative with unity, Noetherian, and contain a field of characteristic $p > 0$.  Furthermore, all rings will be assumed to be reduced and $F$-finite.

\section{Why Definition \ref{DefnOldStrongFReg} does not seem to localize well}

We first begin by reminding the reader of why Hochster and Huneke's original definition localizes well.  It is easy to see that if $R$ is strongly $F$-regular, then $R_{Q}$ is strongly $F$-regular for every $Q \in \Spec R$.  This direction also holds for pairs $(R, \ba^t)$.  Therefore, we will sketch the converse in the classical non-pair setting.

For any $d \in R^{\circ}$ and for each $e \geq 0$, consider the map
\[
 \Phi_{d, e} : \Hom_R(R^{1/p^e}, R) \rightarrow R
\]
which is the evaluation map at $d^{1/p^e}$ (that is, $\phi \mapsto \phi(d^{1/p^e})$).  It is easy to see that $R$ is strongly $F$-regular if and only if for every $d$, $\Phi_{d,e}$ is surjective for some $e > 0$. Since $R$ is $F$-finite, this is equivalent to requiring that $\left(\Phi_{d, e}\right)_{\bm}$ is surjective after localization at each maximal ideal $\bm \in \Spec R$.

Therefore, the only question is whether we can find a common $e$ so that the statement holds after localization at each maximal ideal $\bm \subset R$.  To this end, observe that if $(\Phi_{d, {e_0}})_{\bm}$ is surjective, it is also surjective for all $e > e_0$ (since a strongly $F$-regular local ring is also $F$-pure).

Now, as $e$ increases, the support of the modules $R / \Image(\Phi_{d, e})$ (which is also well behaved with respect to localization), is a decreasing set of closed subsets of $\Spec R$.  On the other hand, each point of $\Spec R$ is not contained in $\Supp \left(R / \Image(\Phi_{d,e}) \right)$ for $e$ sufficiently large.  Thus, we must have that $\Supp \left(R / \Image(\Phi_{d,e}) \right) = \emptyset$ for $e \gg 0$ since $R$ is Noetherian.  This implies that $R  = \Image(\Phi_{d,e})$ for $e \gg 0$.

Consider now a pair $(R, \ba^t)$.  Let us try to argue in the same way we did for the original definition of a strongly $F$-regular ring, see Definition \ref{DefnOldStrongFReg}.  In that case, we are restricting the map $\Phi_{d, e}$ to the set $S$ of maps $\phi : R^{1/p^e} \to R$ that can be written in the form $\phi(\blank) = \psi(a^{1/p^e} \blank)$ for some $\psi \in \Hom_R(R^{1/p^e}, R)$ and some $a \in \ba^{\lceil t(p^e - 1) \rceil}$.  The problem is, the set $S$ is not necessarily an $R$-submodule (or even a subgroup) of $\Hom_R(R^{1/p^e}, R)$.  Thus, we cannot say that $\Phi_{d,e}|_S$ is surjective if and only if $\Phi_{d,e}|_S$ is surjective after localizing at every maximal ideal.

\begin{remark}
\label{RemTheMapsAtLeastGenerate}
 While the set of the maps $\phi$ of the form $\phi(\blank) = \psi(a^{1/p^e} \blank)$ are not necessarily a $R^{1/p^e}$-submodule of $\Hom_R(R^{1/p^e}, R)$, they do generate the submodule $\Hom_R(R^{1/p^e}, R) \cdot \left( \ba^{\lceil t(p^e - 1) \rceil} \right)^{1/p^e}$.  This will be useful later.
\end{remark}

We also recall the author's original definition of sharply $F$-pure pairs.

\begin{definition} \cite{SchwedeSharpTestElements}
\label{DefnOldSharplyFPure}
 Suppose that $R$ is an $F$-finite reduced ring in characteristic $p > 0$, $\ba \subseteq R$ is an ideal and $t > 0$ is a real number.  Then we say that the pair $(R, \ba^t)$ is \emph{sharply $F$-pure} if there exists an $e > 0$ and an element $a \in \ba^{\lceil t(p^e - 1) \rceil}$, such that the inclusion $
 \xymatrix@R=0pt{
R \ar@{^{(}->}[r] & R^{1/p^e}}$ which sends $\xymatrix{
  1 \ar@{|->}[r] & (da)^{1/p^e}
 }$  splits as a map of $R$-modules.
\end{definition}

It is easy to see that this definition suffers from the same defect that Definition \ref{DefnOldStrongFReg} suffers from.

\section{A ``better'' definition}

Before we give our refined definition, we first fix some notation.

\begin{definition}
 A triple $(R, \Delta, \ba^t)$ is the combined information of
\begin{itemize}
 \item[(1)] an $F$-finite reduced ring $R$,
 \item[(2)] an ideal $\ba \subseteq R$ such that $\ba \cap R^{\circ} \neq \emptyset$,
 \item[(3)] a real number $t > 0$.
\end{itemize}
Furthermore, if $R$ is a normal domain, we also consider
\begin{itemize}
 \item[(4)] $\Delta$ an effective $\bQ$-divisor on $X = \Spec R$.\footnote{If $R$ is not a normal domain, we assume $\Delta = 0$.}
\end{itemize}
\end{definition}
If $\ba = R$ (respectively, if $\Delta = 0$) then we call the triple $(R, \Delta, \ba^t)$ a pair and denote it by $(R, \Delta)$ (respectively, by $(R, \ba^t)$).
Note that if $R$ is strongly $F$-regular, then $R$ is normal, so condition (4) is not so restrictive.   On the other hand, little is lost in this paper if one always assumes that $\Delta = 0$.

Given an effective integral divisor $D$ on $X = \Spec R$, we use the notation $R(D)$ to denote the global sections of the $\O_X$-module $\O_X(D)$.  Also note that for any effective divisor $D$, there is a natural map $R \rightarrow R(D)$.  Therefore, we have natural maps
\[
\pi_{\Delta, e}:  \Hom_R((R(\lceil (p^e - 1)\Delta \rceil))^{1/p^e}, R) \rightarrow \Hom_R(R^{1/p^e}, R).
\]
These maps are always injective.  Of course, if $\Delta = 0$, then $\pi_{\Delta, e}$ is the identity.

The notation
\[
\Image\left( \pi_{\Delta,e} \right) \cdot \left(J^{1/p^e} \right) = \Image\left(\Hom_R((R(\lceil (p^e - 1)\Delta \rceil))^{1/p^e}, R)
\rightarrow \Hom_R(R^{1/p^e}, R) \right) \cdot \left(J^{1/p^e} \right)
\]
will be used to denote the $R^{1/p^e}$-submodule of $\Hom_R(R^{1/p^e}, R)$ obtained by multiplying the $R^{1/p^e}$-submodule
\[
\Image(\pi_{\Delta, e}) \subseteq \Hom_R(R^{1/p^e}, R)
\]
by the $R^{1/p^e}$-ideal $J^{1/p^e}$.  It is important to note that the elements of this new submodule are still elements of $\Hom_R(R^{1/p^e}, R)$.

\begin{definition}
\label{DefnBetterDefn}
 Suppose that $(R, \Delta, \ba^t)$ is a triple.
 \begin{itemize}
\item{}  We say that $(R, \Delta, \ba^t)$ is \emph{locally strongly $F$-regular} if, for every $d \in R^{\circ}$, there exists $e > 0$, and a map $\phi \in \Image\left(\pi_{\Delta,e} \right) \cdot \left(d \ba^{\lceil t(p^e - 1) \rceil} \right)^{1/p^e}$ where $\phi : R^{1/p^e} \rightarrow R$ is surjective.
\item{}  We say that $(R, \Delta, \ba^t)$ is \emph{locally sharply $F$-pure} if there exists $e > 0$, and a map $\phi \in \Image\left(\pi_{\Delta,e} \right) \cdot \left(\ba^{\lceil t(p^e - 1) \rceil} \right)^{1/p^e}$ where $\phi : R^{1/p^e} \rightarrow R$ is surjective.

 \end{itemize}
\end{definition}

We now state several equivalent definitions.

\begin{lemma}
\label{LemmaEquivDefns}
 Suppose that $(R, \Delta, \ba^t)$ is a triple.  Then the following are equivalent:
  \begin{itemize}
	\item[(a)]  The triple $(R, \Delta, \ba^t)$ is locally strongly $F$-regular.
        \item[(b)]  For every $d \in R^{\circ}$, there exists some $e > 0$ and some $\phi \in \Image\left(\pi_{\Delta,e} \right) \cdot \left(d \ba^{\lceil t(p^e - 1) \rceil} \right)^{1/p^e}$ that splits the natural map $R \rightarrow R^{1/p^e}$.
        \item[(c)]  For every $d \in R^{\circ}$, there exists some $e > 0$ and some $\phi \in \Image\left(\pi_{\Delta,e} \right) \cdot \left(\ba^{\lceil t(p^e - 1) \rceil} \right)^{1/p^e}$ such that $\phi(d^{1/p^e}) = 1$.
        \item[(d)]  The map,
	    \[
            	\Image\left(\pi_{\Delta,e} \right) \cdot \left(d \ba^{\lceil t(p^e - 1) \rceil} \right)^{1/p^e} \rightarrow R,
            \]
             which evaluates an element $\phi$ at $1$, is surjective for some $e > 0$.
    \end{itemize}
  Furthermore, the following are also equivalent:
\begin{itemize}
	\item[(a$'$)]  The triple $(R, \Delta, \ba^t)$ is locally sharply $F$-pure.
        \item[(b$'$)]  For some $e > 0$, there exists $\phi \in \Image\left(\pi_{\Delta,e} \right) \cdot \left( \ba^{\lceil t(p^e - 1) \rceil} \right)^{1/p^e}$ that splits the natural map $R \rightarrow R^{1/p^e}$.
        \item[(c$'$)]  For some $e > 0$, there exists $\phi \in \Image\left(\pi_{\Delta,e} \right) \cdot \left(\ba^{\lceil t(p^e - 1) \rceil} \right)^{1/p^e}$ such that $\phi(1) = 1$.
        \item[(d$'$)]  The map,
        \[
            \Image\left(\pi_{\Delta,e} \right) \cdot \left(\ba^{\lceil t(p^e - 1) \rceil} \right)^{1/p^e} \rightarrow R,
        \]
        which evaluates an element $\phi$ at $1$, is surjective for some $e > 0$.
    \end{itemize}
\end{lemma}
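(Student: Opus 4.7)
The plan is to prove the equivalences (a)$\Leftrightarrow$(b)$\Leftrightarrow$(c)$\Leftrightarrow$(d) in the locally strongly $F$-regular half, and then observe that (a$'$)$\Leftrightarrow$(b$'$)$\Leftrightarrow$(c$'$)$\Leftrightarrow$(d$'$) is obtained by running the same three arguments with $d=1$ fixed throughout. The entire proof rests on the single structural fact that, by design of Definition \ref{DefnBetterDefn}, the collection $\Image(\pi_{\Delta,e}) \cdot (d\ba^{\lceil t(p^e-1)\rceil})^{1/p^e}$ is an honest $R^{1/p^e}$-submodule of $\Hom_R(R^{1/p^e},R)$, rather than a mere subset as in Definition \ref{DefnOldStrongFReg}.

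For (a)$\Rightarrow$(b), given a surjective $\phi$ in this submodule I would choose any $r^{1/p^e}\in R^{1/p^e}$ with $\phi(r^{1/p^e})=1$ and form the rescaled map $r^{1/p^e}\cdot\phi$. By $R^{1/p^e}$-closure this map still belongs to the same submodule, and it satisfies $(r^{1/p^e}\cdot\phi)(1)=\phi(r^{1/p^e})=1$, so it splits $R\hookrightarrow R^{1/p^e}$. The reverse implication (b)$\Rightarrow$(a) is trivial since a splitting is surjective.

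For (b)$\Leftrightarrow$(c), I would invoke the identity
\[
\Image(\pi_{\Delta,e})\cdot(d\ba^{\lceil t(p^e-1)\rceil})^{1/p^e} \;=\; d^{1/p^e}\cdot\bigl(\Image(\pi_{\Delta,e})\cdot(\ba^{\lceil t(p^e-1)\rceil})^{1/p^e}\bigr),
\]
so that every $\phi$ on the left can be uniquely written as $d^{1/p^e}\cdot\psi$ with $\psi$ on the right, and under this correspondence the condition $\phi(1)=1$ becomes $\psi(d^{1/p^e})=1$; the transfer goes both ways. For (b)$\Leftrightarrow$(d), the image of the evaluation-at-$1$ map is an $R$-submodule of $R$, i.e., an ideal, so it is surjective if and only if it contains $1$, which is the statement that some admissible $\phi$ sends $1$ to $1$.

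The sharply $F$-pure equivalences follow from exactly the same three arguments with $d=1$, so nothing additional is needed there. The only step where I expect even mild friction is the rescaling in (a)$\Rightarrow$(b): it genuinely requires that the set of admissible maps be closed under the full $R^{1/p^e}$-action, not merely under addition (cf.\ Remark \ref{RemTheMapsAtLeastGenerate}). This is precisely the reason Definition \ref{DefnBetterDefn} was formulated using the submodule generated by the old admissible maps rather than the set of old admissible maps itself.
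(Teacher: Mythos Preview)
Your proof is correct and follows the same approach as the paper: the paper explicitly does only your (a)$\Rightarrow$(b) step (rescaling a surjective $\phi$ by $x^{1/p^e}$ to produce a splitting) and leaves the remaining equivalences to the reader, which you have filled in correctly. One small quibble: in (b)$\Leftrightarrow$(c) the decomposition $\phi = d^{1/p^e}\cdot\psi$ need not be \emph{unique}, but your argument never uses uniqueness---existence of some such $\psi$ (guaranteed by the module identity you state) is all that is required.
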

\begin{proof}
Note first that condition (b) certainly implies condition (a).  Conversely, if $\phi \in \Image\left(\pi_{\Delta,e} \right) \cdot \left(d \ba^{\lceil t(p^e - 1) \rceil} \right)^{1/p^e}$ is surjective, then there exists $x \in R$ such that $\phi(x^{1/p^e}) = 1$.  But then the map $(\phi \cdot x^{1/p^e})$ sends $1$ to $1$ and so condition (b) is satisfied.  We will leave the equivalence of (b), (c) and (d) to the reader as they are similarly straightforward.   The equivalence of (a$'$) through (d$'$) is essentially the same.
\end{proof}

In Section \ref{SectionCasesWhereTheyAgree}, we will prove that if $R$ is local, then $(R, \Delta, \ba^t)$ is locally strongly $F$-regular (respectively, locally sharply $F$-pure) if and only if the localized triple $(R_Q, \Delta|_{\Spec R_Q}, \ba_Q^t)$ is strongly $F$-regular (respectively, sharply $F$-pure) for every $Q \in \Spec R$.  This justifies the \emph{``locally'' strongly $F$-regular} terminology.
However, the author feels that it would be better if the word ``locally'' was removed from future work (but that Definition
\ref{DefnBetterDefn} was still used).  Regardless, in this note, because there are two definitions, we will use the word ``locally'' to distinguish the new version.

\begin{remark}
 Definition \ref{DefnBetterDefn} can easily be generalized by replacing $\ba^t$ with a graded system of ideals $\ba_{\bullet}$, see \cite{HaraACharacteristicPAnalogOfMultiplierIdealsAndApplications} and \cite{SchwedeCentersOfFPurity}.  We won't do this here however.
\end{remark}

\begin{question}
Is there an example of a pair $(R, \ba^t)$ that is locally strongly $F$-regular (respectively, locally sharply $F$-pure) but not strongly $F$-regular (respectively, sharply $F$-pure)?
\end{question}

It seems that such an example may be difficult to construct (as there would be infinitely many conditions to check).

\section{The ``better'' definition behaves well with respect to localization}
\label{SectionBetterDefnLocalizes}

In this section, we show that locally strongly $F$-regular (respectively, locally sharply $F$-pure) triples can be characterized locally.  First however, we need a lemma.

\begin{lemma}
\label{LemmaCompositionIsOk}
 Suppose that we have maps:
\[
\phi \in \Image\left(\pi_{\Delta,e} \right) \cdot \left(\ba^{\lceil t(p^e - 1) \rceil} \right)^{1/p^e} \text{, }
\psi \in \Image\left(\pi_{\Delta,d} \right) \cdot \left(\ba^{\lceil t(p^d - 1) \rceil} \right)^{1/p^d}.
\]
Then $\phi \circ (\psi^{1/p^e})$ is contained in
\[
 \Image\left(\pi_{\Delta,e+d}\right) \cdot \left(\ba^{\lceil t(p^{e+d} - 1) \rceil} \right)^{1/p^{d+e}}.
\]
\end{lemma}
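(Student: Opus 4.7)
The plan is to write both $\phi$ and $\psi$ as explicit finite sums of generators of the relevant $\Hom$-submodules, compute the composition term by term, and reduce the lemma to two independent checks. Writing
\[
\phi = \sum_i \phi_i \cdot a_i^{1/p^e}, \qquad \psi = \sum_j \psi_j \cdot b_j^{1/p^d},
\]
with $\phi_i \in \Image(\pi_{\Delta,e})$, $\psi_j \in \Image(\pi_{\Delta,d})$, $a_i \in \ba^{\lceil t(p^e - 1) \rceil}$, and $b_j \in \ba^{\lceil t(p^d - 1) \rceil}$, I then take $p^e$-th roots of $\psi$ and use the $R$-linearity of each $\psi_j$ to collect all constants; a routine manipulation will give
\[
\phi \circ \psi^{1/p^e} \;=\; \sum_{i,j} \bigl( \phi_i \circ \psi_j^{1/p^e} \bigr) \cdot \bigl( a_i^{p^d} b_j \bigr)^{1/p^{e+d}}.
\]
It will then suffice to verify (i) $a_i^{p^d} b_j \in \ba^{\lceil t(p^{e+d} - 1)\rceil}$, and (ii) each composition $\phi_i \circ \psi_j^{1/p^e}$ lies in $\Image(\pi_{\Delta, e+d})$.

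Part (i) will be a direct ceiling calculation based on the identity $p^{e+d} - 1 = p^d(p^e - 1) + (p^d - 1)$: the element $a_i^{p^d} b_j$ lies in $\ba^{p^d \lceil t(p^e - 1)\rceil + \lceil t(p^d - 1)\rceil}$, and that integer exponent is bounded below by $p^d \cdot t(p^e - 1) + t(p^d - 1) = t(p^{e+d} - 1)$, hence by $\lceil t(p^{e+d} - 1)\rceil$.

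Part (ii) is the technical heart of the argument. Choose lifts $\hat{\phi}_i \colon (R(\lceil(p^e-1)\Delta\rceil))^{1/p^e} \rightarrow R$ and $\hat{\psi}_j \colon (R(\lceil(p^d-1)\Delta\rceil))^{1/p^d} \rightarrow R$ of $\phi_i$ and $\psi_j$. My plan is to first enlarge $\hat{\psi}_j$ to a map
\[
\check{\psi}_j \colon (R(\lceil(p^{e+d} - 1)\Delta\rceil))^{1/p^d} \rightarrow R(\lceil(p^e - 1)\Delta\rceil);
\]
once this is done, $\hat{\phi}_i \circ \check{\psi}_j^{1/p^e}$ will provide the required extension of $\phi_i \circ \psi_j^{1/p^e}$, because the enlarged codomain of $\check{\psi}_j^{1/p^e}$ sits exactly inside the domain of $\hat{\phi}_i$. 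When $\Delta = 0$ there is nothing to do; when $\Delta \neq 0$ the ring $R$ is normal by assumption and $R(\lceil(p^e-1)\Delta\rceil)$ is reflexive, hence $S_2$, so the existence of $\check{\psi}_j$ can be verified one height-one prime $P$ of $R$ at a time.

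The height-one check is where the main obstacle lies. At such a $P$, with $c$ denoting the coefficient of $P$ in $\Delta$, the modules $(R_P(\lceil n \Delta\rceil))^{1/p^d}$ are explicit rank-one fractional $R_P^{1/p^d}$-ideals obtained by scaling by powers of a uniformizer, and $R_P^{1/p^d}$ is free over $R_P$. A valuation bookkeeping on a chosen $R_P$-basis will show that $\check{\psi}_j$ exists exactly when the valuation deficit of the inclusion of domains is absorbed by the enlargement of the codomain, which reduces to the inequality
\[
\lceil(p^{e+d} - 1)c\rceil \;\leq\; \lceil(p^d - 1)c\rceil + p^d \lceil(p^e - 1)c\rceil.
\]
This is the ceiling analogue of the estimate from part (i), with $c$ in place of $t$, and it follows from $\lceil A + B\rceil \leq \lceil A\rceil + \lceil B\rceil$ combined with $\lceil p^d x\rceil \leq p^d \lceil x\rceil$. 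Gluing the local extensions via the $S_2$ property of the codomain will then produce the global $\check{\psi}_j$, and the lemma will follow.
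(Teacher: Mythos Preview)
Your proposal is correct and follows essentially the same strategy as the paper: reduce by bilinearity to single-term $\phi$ and $\psi$, verify the ceiling inequality $p^d\lceil t(p^e-1)\rceil + \lceil t(p^d-1)\rceil \geq \lceil t(p^{e+d}-1)\rceil$ for the $\ba$-part, and then handle the $\Delta$-part by reducing to height-one primes via reflexivity.

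The only genuine difference is in how the height-one step is executed. The paper observes that $\Image(\pi_{\Delta,e+d})$ and $\Hom_R(R^{1/p^{e+d}},R)$ are rank-one reflexive $R^{1/p^{e+d}}$-modules, so containment can be checked at height-one primes; there it identifies $(R_Q,\Delta|_{\Spec R_Q})$ with a principal pair $(R_Q,(f)^{1/n})$ and appeals to the already-proven ideal computation (and to \cite[Lemma~2.5]{TakagiInterpretationOfMultiplierIdeals}). You instead construct an explicit lift $\hat{\phi}_i \circ \check{\psi}_j^{1/p^e}$ by enlarging $\hat{\psi}_j$ to $\check{\psi}_j$, reducing the existence of $\check{\psi}_j$ to the valuation inequality $\lceil(p^{e+d}-1)c\rceil \le \lceil(p^d-1)c\rceil + p^d\lceil(p^e-1)c\rceil$ at each height-one prime, and gluing via the $S_2$/reflexivity property of the codomain. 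Your route is more self-contained and makes the parallel between the $\ba$-estimate and the $\Delta$-estimate completely transparent (it is literally the same ceiling inequality with $c$ in place of $t$); the paper's route is terser and leans on the principal-ideal case already in hand. Both arrive at the same place.
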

\begin{proof}
 It is sufficient to check this for some $\phi$ of the form $\phi(\blank) = \phi'(x^{1/p^e} \blank)$ where $\phi' \in \Image(\pi_{\Delta,e})$ and $x \in \ba^{\lceil t(p^e - 1) \rceil}$.  This is because of two facts:
\begin{itemize}
 \item[(1)]  Every element of $\Image\left(\pi_{\Delta,e} \right) \cdot \left(\ba^{\lceil t(p^e - 1) \rceil} \right)^{1/p^e}$ is a sum of elements of the form $\phi_i(\blank) = \phi'_i(x_j^{1/p^e} \blank)$ for $\phi'_i \in \Image(\pi_{\Delta,e})$ and $x_j \in \ba^{\lceil t(p^e - 1) \rceil}$.
\item[(2)]  A composition of a sum of maps with a map (ie, $(\phi_1 + \phi_2) \circ \psi^{1/p^e}$) is a sum of compositions of maps (ie, $\phi_1 \circ \psi^{1/p^e} + \phi_2 \circ \psi^{1/p^e}$).
\end{itemize}
Likewise, we may assume that $\psi$ is of the form $\psi(\blank) = \psi'(y^{1/p^d} \blank)$ for some $\psi' \in \Image(\pi_{\Delta, d})$ and some $y \in \ba^{\lceil t(p^d - 1) \rceil}$.

Now, $\phi'\left(x^{1/p^e} \left( \psi' (y^{1/p^{d}} \blank) \right)^{1/p^e} \right) = \phi'\left( \psi'^{1/p^e} (x^{1/p^e} y^{1/p^{d + e}} \blank)\right)$.  But we have that
\[
\begin{split}
x^{1/p^e} y^{1/p^{d+e}} = (x^{p^d} y)^{1/p^{d+e}} \\
\in \left( \ba^{p^d \lceil t(p^e - 1) \rceil} \ba^{\lceil t(p^{d} - 1) \rceil}
\right)^{1/p^{d+e}} \\
\subseteq \left( \ba^{\lceil t(p^{d+e} - 1) \rceil} \right)^{1/p^{d+e}}.
\end{split}
\]
Therefore, it is sufficient to show that $\phi' \circ (\psi')^{1/p^e} \in \Image(\pi_{\Delta, d+e})$.

If $\Delta = 0$, we are done, so we may assume $\Delta \neq 0$ and that $R$ is a normal domain.  Therefore, it is sufficient to check this at height one primes of $R$ since the modules $\Image(\pi_{\Delta,e+d})$ and $\Hom_R(R^{1/p^{e+d}}, R)$ are rank 1 reflexive $R^{1/p^{e+d}}$-modules.  However, at a height one prime $Q \in \Spec R$, the pair $(R_Q, \Delta|_{\Spec R_Q})$ can be identified with a pair $(R_Q, (f)^{1/n})$ where $n \Delta$ is integral and $f$ is a local defining equation for $n \Delta$ at $Q$.  Then the argument follows as in the case above, also see
\cite[Proof of Lemma 2.5]{TakagiInterpretationOfMultiplierIdeals}.
\end{proof}

\begin{remark}
\label{RemarkComposingIsOk}
Suppose that $(R, \Delta, \ba^t)$ is locally sharply $F$-pure (respectively, locally strongly $F$-regular), due to the existence of some $\phi \in \Image\left(\pi_{\Delta,e} \right) \cdot \left(\ba^{\lceil t(p^e - 1) \rceil} \right)^{1/p^e}$ with $1 \in \phi(R^{1/p^e})$ (respectively, with $1 \in \phi(d^{1/p^e} R^{1/p^e}))$).  Lemma \ref{LemmaCompositionIsOk} implies that for every $n \geq 1$, we can find
\[
\phi_n \in \Image\left(\pi_{\Delta,ne} \right) \cdot \left(\ba^{\lceil t(p^{ne} - 1) \rceil} \right)^{1/p^{ne}}
\]
with $1 \in \phi_n(R^{1/p^{ne}})$ (respectively, with $1 \in \phi_n(d^{1/p^{ne}}R^{1/p^{ne}})$).
\end{remark}

\begin{theorem}
\label{TheoremDefnLocalizes}
 A triple $(R, \Delta, \ba^t)$ is locally strongly $F$-regular (respectively locally sharply $F$-pure) if and only if $(R_Q, \Delta|_{\Spec R_Q}, \ba_Q^t)$ is locally strongly $F$-regular (respectively, locally sharply $F$-pure) for every ideal $Q \in \Spec R$.
\end{theorem}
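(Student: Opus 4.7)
The forward direction is routine: any witness $\phi \in \Image(\pi_{\Delta,e}) \cdot (d\ba^{\lceil t(p^e-1)\rceil})^{1/p^e}$ localizes at any $Q \in \Spec R$ to a witness for $(R_Q, \Delta|_{\Spec R_Q}, \ba_Q^t)$, using that $R^{1/p^e}$ is finitely presented over $R$ (so $\Hom$ commutes with localization), together with the fact that $d/1 \in R_Q^\circ$ whenever $d \in R^\circ$. So I focus on the converse.

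Fix $d \in R^{\circ}$; the sharply $F$-pure case amounts to running the same argument with $d$ replaced by $1$. Write $M_{d,e} := \Image(\pi_{\Delta,e}) \cdot (d\ba^{\lceil t(p^e-1)\rceil})^{1/p^e}$, let $J_{d,e} \subseteq R$ be the ideal image of the evaluation map $M_{d,e} \to R$ sending $\phi \mapsto \phi(1)$, and put $U_e := \Spec R \setminus V(J_{d,e})$, which is open. By Lemma \ref{LemmaEquivDefns}(d) together with the compatibility of the construction of $M_{d,e}$ with localization, the hypothesis reads exactly $\bigcup_{e \geq 1} U_e = \Spec R$; the task is to find a single $e$ with $U_e = \Spec R$.

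The central step is the inclusion $U_e \cap U_{e'} \subseteq U_{e+e'}$, which I will deduce from the following enhancement of Lemma \ref{LemmaCompositionIsOk}: if $\phi \in M_{d,e}$ and $\psi \in M_{d,e'}$, then $\phi \circ \psi^{1/p^e} \in M_{d,e+e'}$. Writing $\phi = d^{1/p^e}\phi''$ and $\psi = d^{1/p^{e'}}\psi'$ with $\phi'' \in N_e := \Image(\pi_{\Delta,e}) \cdot (\ba^{\lceil t(p^e-1)\rceil})^{1/p^e}$ and $\psi' \in N_{e'}$, a short calculation exploiting the $R^{1/p^e}$-linearity of $\psi^{1/p^e}$ gives $\phi \circ \psi^{1/p^e} = (d^{p^{e'}+1})^{1/p^{e+e'}} \cdot (\phi'' \circ \psi'^{1/p^e})$. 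Now $\phi'' \circ \psi'^{1/p^e} \in N_{e+e'}$ by Lemma \ref{LemmaCompositionIsOk}, and since $(d^{p^{e'}+1})^{1/p^{e+e'}} = d^{1/p^{e+e'}} \cdot (d^{p^{e'}})^{1/p^{e+e'}}$ with the trailing factor absorbed into the $R^{1/p^{e+e'}}$-module $N_{e+e'}$, the composition indeed lies in $d^{1/p^{e+e'}} \cdot N_{e+e'} = M_{d,e+e'}$. When additionally $\phi(1) = \psi(1) = 1$, I compute $(\phi \circ \psi^{1/p^e})(1) = \phi((\psi(1))^{1/p^e}) = \phi(1) = 1$, which is exactly the witness for $U_{e+e'}$.

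Iterating yields $U_e \subseteq U_{ke}$ for every $k \geq 1$, so $V_n := U_{n!}$ is an ascending chain of opens in $\Spec R$. Each $Q$ lies in some $U_{e(Q)}$ and therefore in $V_{e(Q)}$ (as $e(Q)$ divides $e(Q)!$), so $\bigcup_n V_n = \Spec R$; the Noetherian topology of $\Spec R$ then forces $V_n = \Spec R$ for some $n$, i.e., $J_{d, n!} = R$. Running this over all $d \in R^{\circ}$ closes the strongly $F$-regular case; the $d = 1$ specialization (where the composition identity degenerates to Lemma \ref{LemmaCompositionIsOk} directly) handles the sharply $F$-pure case. The only non-bookkeeping point is the $M$-version of the composition lemma, but the ring-theoretic identity $d^{p^{e'}+1} \in dR$ reduces it cleanly to the $N$-version already established.
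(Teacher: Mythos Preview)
Your proof is correct and follows essentially the same approach as the paper: both arguments use the composition lemma (Lemma \ref{LemmaCompositionIsOk}) to show that surjectivity of the evaluation map at level $e$ propagates to multiples of $e$, and then invoke Noetherianity of $\Spec R$ to find a single $e$ that works everywhere. Your packaging is slightly different---you track the element $d$ explicitly through an ``$M$-version'' of the composition lemma and conclude via an ascending chain $V_n = U_{n!}$, whereas the paper appeals to Remark \ref{RemarkComposingIsOk} and uses a finite open cover with a common multiple of the $e_i$---but these are cosmetic variations of the same argument.
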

\begin{proof}
 This may be obvious to experts, but because the original definition seems to lack this property, we prove it carefully here.
First we note that the direction $(\Rightarrow)$ is straightforward and not substantially different from the classical non-pair setting.  Thus we only prove the $(\Leftarrow)$ direction.  Suppose that for each $Q \in \Spec R$, $(R_Q, \Delta|_{\Spec R_Q}, \ba_Q^t)$ is locally strongly $F$-regular (respectively, locally sharply $F$-pure).  Fix a $d \in R^{\circ}$ (or set $d = 1$, if one is checking the sharply $F$-pure case).  By Lemma \ref{LemmaEquivDefns}(d), we see that for each $Q \in \Spec R$, there exists an $e_Q > 0$ so that the map which evaluates at $1$,
 \[
   E_{e_Q, Q} : \Image\left(\pi_{\Delta,e_Q} \right)_Q \cdot \left(d \ba^{\lceil t(p^{e_Q} - 1) \rceil} \right)^{1/p^{e_Q}}_Q \rightarrow R_Q,
 \]
 is surjective.  But then for each $Q$, this holds in an affine neighborhood $U_Q$ of $Q$.  We can cover $X = \Spec R$ by a finite collection of such neighborhoods $U_1, \ldots, U_n$ with corresponding surjective evaluation maps $E_{e_i, U_i}$.  Of course, the particular $e_i$'s associated to each neighborhood may vary.  However, Lemma \ref{LemmaCompositionIsOk} implies that if $E_{e_i, U_i}$ is surjective, then so is $E_{n e_i, U_i}$ for every $n > 0$.  Thus, increasing the $e_i$ if necessary, we can find a common $e$ that works on all $U_i$.  But then we are done, since a map of $R$-modules is surjective if and only if the corresponding maps on a finite affine cover of $\Spec R$ are surjective.
\end{proof}

\section{Cases where the two definitions agree}
\label{SectionCasesWhereTheyAgree}

In this section, we prove that the two definitions agree in the cases (i) through (iv) mentioned in the introduction.
We first do conditions (iii) and (iv).

\begin{proposition}
Suppose that $(R, \Delta, \ba^t)$ is a triple.  Further suppose that either:
\begin{itemize}
\item[(a)] $\ba$ is a principal ideal, or
\item[(b)]  $\Image\left(\pi_{\Delta,e} \right)$ is a free $R^{1/p^e}$-module for some $e > 0$.
\end{itemize}
  Then $(R, \Delta, \ba^t)$ is strongly $F$-regular (respectively, sharply $F$-pure) if and only if it is locally strongly $F$-regular (respectively, locally sharply $F$-pure).
\end{proposition}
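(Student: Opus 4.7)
The implication ``old $\Rightarrow$ locally'' (for both $F$-regularity and sharp $F$-purity) is automatic from Remark \ref{RemTheMapsAtLeastGenerate}: an old-style single-term map $\phi(\blank) = \psi((da)^{1/p^e}\blank)$ already lies in $\Image(\pi_{\Delta,e}) \cdot (d\ba^{\lceil t(p^e-1)\rceil})^{1/p^e}$. The content is the converse, and the unifying tool is a characteristic $p$ identity: since the $p^e$-th power map is additive, $(x+y)^{1/p^e} = x^{1/p^e} + y^{1/p^e}$, so for any ideal $I \subseteq R$ every element of the $R^{1/p^e}$-ideal $I^{1/p^e}$ collapses to a single $p^e$-th root --- a typical element $\sum_i u_i^{1/p^e} x_i^{1/p^e}$ with $u_i \in R$ and $x_i \in I$ equals $(\sum_i u_i x_i)^{1/p^e}$, and $\sum_i u_i x_i \in I$.

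For case (a), with $\ba = (f)$ and $N = \lceil t(p^e-1)\rceil$, the $R^{1/p^e}$-ideal $(d\ba^N)^{1/p^e}$ is principal with generator $(df^N)^{1/p^e}$. A generic $\phi \in \Image(\pi_{\Delta,e}) \cdot (d\ba^N)^{1/p^e}$ is therefore a finite sum $\sum_i \psi_i \cdot r_i^{1/p^e}(df^N)^{1/p^e}$, and since $\Image(\pi_{\Delta,e})$ is an $R^{1/p^e}$-submodule of $\Hom_R(R^{1/p^e}, R)$ we may absorb the coefficients to obtain $\phi(\blank) = \psi((df^N)^{1/p^e}\blank)$ for a single $\psi \in \Image(\pi_{\Delta,e})$. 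Taking $a = f^N \in \ba^N$ places $\phi$ in the old single-term form, and the condition $\phi(1)=1$ transfers unchanged.

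For case (b), fix $e_0$ with $\Image(\pi_{\Delta,e_0})$ free and choose a generator $\psi_0$ (the module is a rank-$1$ reflexive $R^{1/p^{e_0}}$-module, so ``free'' means free of rank one). By Remark \ref{RemarkComposingIsOk} we may replace the witness $e$ in the locally $F$-regular condition by any multiple, so we assume $e_0 \mid e$; iterating Lemma \ref{LemmaCompositionIsOk} with $\ba = R$ then exhibits $\psi_e := \psi_0 \circ \psi_0^{1/p^{e_0}} \circ \cdots \circ \psi_0^{1/p^{(e/e_0 - 1)e_0}}$ as a generator of the free rank-one $R^{1/p^e}$-module $\Image(\pi_{\Delta,e})$. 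An arbitrary $\phi \in \Image(\pi_{\Delta,e}) \cdot (d\ba^N)^{1/p^e}$ therefore equals $\psi_e(s\,\blank)$ for some $s \in (d\ba^N)^{1/p^e}$. The characteristic $p$ collapse applied to $I = d\ba^N$ gives $s = y^{1/p^e}$ with $y \in d\ba^N$, and because $d \in R^{\circ}$ is a non-zero-divisor on the reduced ring $R$ we may write $y = da$ for some $a \in \ba^N$. Hence $\phi(\blank) = \psi_e((da)^{1/p^e}\blank)$ is already in the old single-term form. The sharply $F$-pure case runs identically with $d$ omitted throughout.

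The main obstacle is the claim that $\Image(\pi_{\Delta,ne_0})$ remains free once $\Image(\pi_{\Delta,e_0})$ is. Because this module is a reflexive $R^{1/p^{ne_0}}$-module of rank one, freeness reduces to checking that the composite $\psi_e$ is a unit multiple of a generator at every height-one prime; this in turn reduces to the principal-ideal computation carried out at the end of the proof of Lemma \ref{LemmaCompositionIsOk}. In the main application of hypothesis (b) (the pseudo-$\bQ$-Gorenstein setting with index not divisible by $p$) the freeness of $\Image(\pi_{\Delta,n e_0})$ for every $n \geq 1$ is automatic, so no extra work is required.
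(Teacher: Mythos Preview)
Your argument is correct and follows the paper's approach essentially verbatim: use Remark~\ref{RemarkComposingIsOk} to align $e$ with a multiple of the $e_0$ witnessing freeness, then observe that in both cases every element of $\Image(\pi_{\Delta,e})\cdot(d\ba^{\lceil t(p^e-1)\rceil})^{1/p^e}$ is already a single-term map $\psi((da)^{1/p^e}\,\blank)$. One minor simplification is available in case~(b): you do not need to know that the \emph{particular} composite $\psi_e$ generates $\Image(\pi_{\Delta,e})$; it suffices that $\Image(\pi_{\Delta,ne_0})$ is cyclic for all $n\geq 1$ (any generator will then absorb the sum), and the paper simply cites this cyclicity from \cite[Lemma~3.9, Corollary~3.10]{SchwedeFAdjunction} rather than rederiving it from the height-one calculation in Lemma~\ref{LemmaCompositionIsOk}.
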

\begin{proof}
First assume we are in case (b).  We claim that if $\Image\left(\pi_{\Delta,e} \right)$ is cyclic as an $R^{1/p^e}$-module for some $e > 0$, then $\Image\left(\pi_{\Delta,ne} \right)$ is also cyclic as an $R^{1/p^{ne}}$-module for all $n > 0$.  In the case that $\Delta = 0$, this is essentially an exercise in applying the adjointness of $\tensor$ and $\Hom$, see \cite[Lemma 3.9]{SchwedeFAdjunction}.  In the case that $\Delta \neq 0$, $R$ is normal and so one can reduce to the 1 dimensional case and argue in essentially the same way, see \cite[Corollary 3.10]{SchwedeFAdjunction}.   Therefore, Remark \ref{RemarkComposingIsOk} allows us to assume that we can find $e > 0$ so that condition (b) holds and, for that same $e$, we may find a surjective map in $\phi \in \Image\left(\pi_{\Delta,e} \right) \cdot \left(d \ba^{\lceil t(p^e - 1) \rceil} \right)^{1/p^e}$.

Now, in either case (a) or (b), every element of
\[
\Image\left(\pi_{\Delta,e} \right) \cdot \left(d \ba^{\lceil t(p^e - 1) \rceil} \right)^{1/p^e}
\]
can be written as a map of the form $\phi((da)^{1/p^e} \blank)$ for some $\phi \in \Image\left(\pi_{\Delta,e} \right)$ and some $a \in \ba^{\lceil t(p^e - 1) \rceil}$.  In the sharply $F$-pure case, set $d = 1$.  The result then follows.
\end{proof}

We now note that the two definitions are the same in the case that $R$ is local.

\begin{proposition}
\label{PropSameInLocalCase}
Suppose that $(R, \Delta, \ba^t)$ is a triple.  Further suppose that $(R, \bm)$ is local.  Then $(R, \Delta, \ba^t)$ is strongly $F$-regular (respectively, sharply $F$-pure) if and only if it is locally strongly $F$-regular (respectively, locally sharply $F$-pure).
\end{proposition}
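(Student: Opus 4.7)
The forward direction (``strongly $F$-regular'' $\Rightarrow$ ``locally strongly $F$-regular''; likewise for the sharply $F$-pure case) is immediate from definitions: a single-term splitting map of the form $\phi'\bigl((da)^{1/p^e}\blank\bigr)$ with $\phi'\in \Image(\pi_{\Delta,e})$ and $a\in \ba^{\lceil t(p^e-1)\rceil}$ certainly lies in the submodule $\Image(\pi_{\Delta,e})\cdot \bigl(d\ba^{\lceil t(p^e-1)\rceil}\bigr)^{1/p^e}$. Hence the plan is to focus on the nontrivial direction, and the key ingredient is going to be the (trivial but crucial) fact that in a local ring, any expression $\sum_i u_i = 1$ forces at least one summand $u_i$ to lie outside $\bm$, and thus to be a unit.

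For the converse, fix $d\in R^\circ$ (set $d=1$ in the sharply $F$-pure case). By local strong $F$-regularity (respectively local sharp $F$-purity) together with Lemma \ref{LemmaEquivDefns}(c) (respectively (c$'$)), there exist $e>0$ and $\phi\in \Image(\pi_{\Delta,e})\cdot \bigl(\ba^{\lceil t(p^e-1)\rceil}\bigr)^{1/p^e}$ satisfying $\phi(d^{1/p^e})=1$ (respectively $\phi(1)=1$). Invoking Remark \ref{RemTheMapsAtLeastGenerate}, we may write
\[
\phi(\blank) \;=\; \sum_{i=1}^n \phi'_i\bigl(a_i^{1/p^e}\blank\bigr)
\]
with $\phi'_i\in \Image(\pi_{\Delta,e})$ and $a_i\in \ba^{\lceil t(p^e-1)\rceil}$.

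Now evaluate at $d^{1/p^e}$ in the $F$-regular case (or at $1$ in the sharp case) to obtain
\[
1 \;=\; \sum_{i=1}^n \phi'_i\bigl((da_i)^{1/p^e}\bigr)
\]
(respectively $1=\sum_i \phi'_i(a_i^{1/p^e})$). Since $(R,\bm)$ is local, some summand, say $\phi'_{i_0}((da_{i_0})^{1/p^e})$, must be a unit $u\in R\setminus \bm$. The rescaled map $u^{-1}\phi'_{i_0}$ still lies in $\Image(\pi_{\Delta,e})$, because this image is an $R^{1/p^e}$-submodule of $\Hom_R(R^{1/p^e},R)$ and $u^{-1}\in R\subseteq R^{1/p^e}$. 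Thus $\psi\defeq u^{-1}\phi'_{i_0}$ and $a\defeq a_{i_0}$ give a single-term splitting $\psi\bigl((da)^{1/p^e}\blank\bigr)$ of the map $R\to R^{1/p^e}$ sending $1\mapsto (da)^{1/p^e}$, which is exactly the original definition of strong $F$-regularity (respectively sharp $F$-purity) for the triple.

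There is no real ``hard step'' here: the only subtlety, and the whole reason the argument works for local rings and not in general, is that locality lets one peel off a single summand from a sum equal to $1$ and rescale it. In the non-local setting the same sum could be non-trivial at every maximal ideal without any individual summand being surjective globally, which is precisely the phenomenon motivating the refined definition.
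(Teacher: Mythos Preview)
Your proof is correct and follows essentially the same approach as the paper's: both hinge on the fact that single-term maps $\phi'((da)^{1/p^e}\blank)$ generate the submodule $\Image(\pi_{\Delta,e})\cdot\bigl(d\ba^{\lceil t(p^e-1)\rceil}\bigr)^{1/p^e}$, and that in a local ring a sum equal to $1$ forces some summand to be a unit (the paper phrases this contrapositively---if every single-term evaluation lies in $\bm$, so does every $R$-linear combination). One small remark: you invoke Remark~\ref{RemTheMapsAtLeastGenerate}, which as stated covers only the case $\Delta=0$; the analogous generation statement for general $\Delta$ is equally easy and is exactly what the paper's proof uses, so there is no real gap.
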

\begin{proof}
Note that elements of the form $\phi((da)^{1/p^e} \blank)$, for some $\phi \in \Image\left(\pi_{\Delta,e} \right)$ and some $a \in \ba^{\lceil t(p^e - 1) \rceil}$, generate $\Image\left(\pi_{\Delta,e} \right) \cdot \left(d \ba^{\lceil t(p^e - 1) \rceil} \right)^{1/p^e}$ even as an $R$-module.  Therefore, if all these elements are sent, by evaluation at $1$, into the maximal ideal $\bm$, then so are any of their linear combinations.  For the proof in the sharply $F$-pure case, set $d = 1$.
\end{proof}

Finally, we verify the graded case at least assuming that $\Delta = 0$.  One can do similar things when $\Delta \neq 0$ but the statements become more complicated.

\begin{proposition}
\label{PropSameInGradedCase}
Suppose that $(R, \ba^t)$ is a pair.  Further suppose that $R = \oplus_{i \geq 0} R_i$ is an $\bN$-graded ring and $\ba$ is a graded ideal.  Then $(R, \ba^t)$ is strongly $F$-regular (respectively, sharply $F$-pure) if and only if it is locally strongly $F$-regular (respectively, locally sharply $F$-pure).
\end{proposition}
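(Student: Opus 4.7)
The direction $(\Rightarrow)$ is trivial, since a single-term map is a particular element of $M_e := \Hom_R(R^{1/p^e}, R) \cdot (\ba^{\lceil t(p^e-1)\rceil})^{1/p^e}$. For $(\Leftarrow)$, the plan is to exploit the gradings on $R$, on $\ba$, and on $\Hom_R(R^{1/p^e}, R)$ in order to collapse a sum $\Phi = \sum_i \psi_i \circ a_i^{1/p^e}$ supplied by the ``locally'' hypothesis into a single summand.

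Begin with the sharply $F$-pure case. Lemma \ref{LemmaEquivDefns} yields $e > 0$ and $\Phi \in M_e$ with $\Phi(1) = 1$; write $\Phi = \sum_i \psi_i \circ a_i^{1/p^e}$. Because $\ba^{\lceil t(p^e-1)\rceil}$ is a graded ideal, each $a_i$ can be replaced by its homogeneous components, so I may assume $a_i$ is homogeneous of some degree $n_i$. The $R$-module $\Hom_R(R^{1/p^e}, R)$ inherits a natural $\tfrac{1}{p^e}\bZ$-grading; decomposing each $\psi_i$ into graded components and using that $\Phi(1)=1$ lies in $R_0$, only the degree $-n_i/p^e$ component of $\psi_i$ contributes. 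I therefore reduce to the situation where each $c_i := \psi_i(a_i^{1/p^e})$ lies in $R_0$ and $\sum_i c_i = 1$. In the standard graded setting where $R_0$ is a field, some $c_j$ is a unit, and the single pair $(c_j^{-1}\psi_j,\, a_j)$ witnesses sharp $F$-purity.

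For strong $F$-regularity, fix $d \in R^{\circ}$ and use Lemma \ref{LemmaEquivDefns}(c) to obtain $\Phi = \sum_i \psi_i \circ a_i^{1/p^e} \in M_e$ with $\Phi(d^{1/p^e}) = 1$. Let $d = \sum_k d_k$ be the homogeneous decomposition, so that
\[
1 \;=\; \Phi(d^{1/p^e}) \;=\; \sum_{i,k} \psi_i\bigl((d_k a_i)^{1/p^e}\bigr).
\]
The same graded analysis as above, now applied to the homogeneous products $d_k a_i$, produces a homogeneous map $\psi'$ of degree $-\deg(d_\ell a_j)/p^e$ with $\psi'((d_\ell a_j)^{1/p^e}) = 1$ for some $(j,\ell)$. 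To convert this into $\psi'((d a_j)^{1/p^e}) = 1$, I will first invoke Remark \ref{RemarkComposingIsOk} to enlarge $e$ until $p^e$ exceeds every pairwise difference of the degrees of the $d_k$'s. Expanding $(d a_j)^{1/p^e} = \sum_k (d_k a_j)^{1/p^e}$, the homogeneous map $\psi'$ sends the summand indexed by $k$ into degree $(\deg d_k - \deg d_\ell)/p^e$ of $R$; for $e$ as large as above, this is a non-negative integer only when $k = \ell$, so every other summand vanishes and $\psi'((d a_j)^{1/p^e}) = 1$.

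The main obstacle is the transition from $\sum_i c_i = 1$ in $R_0$ to the existence of a single unit $c_j$: this step appears to require $R_0$ to be a field, the usual implicit hypothesis on graded rings in the $F$-singularities literature. A secondary wrinkle, present only in the strongly $F$-regular case, is that the prescribed $d$ need not be homogeneous; this is handled by the large-$e$ degree argument above.
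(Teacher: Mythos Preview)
Your argument is correct and shares the paper's core idea: exploit the $\tfrac{1}{p^e}\bZ$-grading on $\Hom_R(R^{1/p^e},R)$ together with the grading on $\ba$ to reduce the surjectivity of the evaluation map on $M_e$ to a single-term contribution, exactly as in the local case with $R_+$ playing the role of $\mathfrak m$. Both arguments implicitly require $R_0$ to be a field (or at least local), a hypothesis you rightly flag and the paper uses without comment when it asserts ``surjective if and only if the image is not contained in $R_+$.''

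The one substantive difference is the treatment of non-homogeneous $d$ in the strongly $F$-regular case. The paper simply asserts, ``using standard techniques related to strong $F$-regularity, it is sufficient to verify the statements for homogeneous $d\in R^{\circ}$''; what is meant is the usual test-element reduction (it suffices to check a single homogeneous $c\in R^{\circ}$ with $(R_c,\ba_c^t)$ strongly $F$-regular). You instead keep $d$ arbitrary and, after enlarging $e$ via Remark~\ref{RemarkComposingIsOk} so that $p^e$ dominates all degree differences among the homogeneous components $d_k$, observe that a graded $\psi'$ calibrated to send $(d_\ell a_j)^{1/p^e}$ to $1$ must kill every other $(d_k a_j)^{1/p^e}$ for degree reasons. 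This is a clean, self-contained alternative to the paper's appeal to test-element machinery; it is slightly longer but avoids importing an external reduction. Conversely, the paper's route is shorter once one grants that reduction, and it makes transparent that the whole proof is just the local argument with $\mathfrak m$ replaced by $R_+$.
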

\begin{proof}
Suppose first that $R$ is locally strongly $F$-regular.  It is sufficient to show that $(R, \ba^t)$ is strongly $F$-regular in the usual sense (the case of sharply $F$-pure rings is similar).   We view $\Frob{R}{e}$ as a $\bZ[1/p^e]$-graded $R$-module.  Using standard techniques related to strong $F$-regularity, it is not difficult to see that it is sufficient to verify the statements of Lemma \ref{LemmaEquivDefns} for homogenous $d \in R^{\circ}$.  Note that $\Hom_R(\Frob{R}{e}, R)$ is generated by graded (degree-shifting) homomorphisms since $R$ is $F$-finite.  In particular, the image of the natural map
\[
     \Image\left(\pi_{\Delta,e} \right) \cdot \left(d \ba^{\lceil t(p^e - 1) \rceil} \right)^{1/p^e} \rightarrow R,
\]
which evaluates an element $\phi$ at $1$, is a graded submodule of $R$.  In particular, the map is surjective if and only if the image is not contained in $R_+$.
One then argues exactly the same as in the local case.
\end{proof}

One should note that most of the work done with the previous definition of strongly $F$-regular pairs was in the local setting, see for example \cite{TakagiInversion}, \cite{TakagiWatanabeFPureThresh}, \cite{MustataTakagiWatanabeFThresholdsAndBernsteinSato}, and \cite{TakagiPLTAdjoint}.

\begin{corollary}
\label{CorNewAndOldAreRelated}
A triple $(R, \Delta, \ba^t)$ is locally strongly $F$-regular (respectively, locally sharply $F$-pure) if and only if $(R_{\bm}, \Delta|_{\Spec R_{\bm}}, \ba_{\bm}^t)$ is strongly $F$-regular (respectively, sharply $F$-pure) for every maximal $\bm \in \Spec R$.
\end{corollary}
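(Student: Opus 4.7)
The plan is to deduce this directly from Theorem \ref{TheoremDefnLocalizes} combined with Proposition \ref{PropSameInLocalCase}, treating the two directions separately and being careful about the fact that Theorem \ref{TheoremDefnLocalizes} quantifies over \emph{all} primes whereas the corollary quantifies only over \emph{maximal} ideals.

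For the $(\Rightarrow)$ direction, assume $(R,\Delta,\ba^t)$ is locally strongly $F$-regular (respectively, locally sharply $F$-pure). By Theorem \ref{TheoremDefnLocalizes}, the localized triple $(R_\bm, \Delta|_{\Spec R_\bm}, \ba_\bm^t)$ is locally strongly $F$-regular (respectively, locally sharply $F$-pure) for every maximal $\bm \in \Spec R$. Since $R_\bm$ is local, Proposition \ref{PropSameInLocalCase} identifies ``locally strongly $F$-regular'' with ``strongly $F$-regular'' (and likewise in the $F$-pure case), giving the desired conclusion.

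For the $(\Leftarrow)$ direction, suppose $(R_\bm, \Delta|_{\Spec R_\bm}, \ba_\bm^t)$ is strongly $F$-regular (respectively, sharply $F$-pure) for every maximal ideal $\bm$. By Proposition \ref{PropSameInLocalCase}, each such $(R_\bm, \Delta|_{\Spec R_\bm}, \ba_\bm^t)$ is then also \emph{locally} strongly $F$-regular (respectively, locally sharply $F$-pure). To invoke Theorem \ref{TheoremDefnLocalizes} for $R$ itself, we need the corresponding property at every prime $Q$, not merely every maximal $\bm$. Given $Q \in \Spec R$, pick a maximal ideal $\bm \supseteq Q$ and note that $R_Q = (R_\bm)_{Q R_\bm}$, with $\Delta$ and $\ba$ restricting accordingly. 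Applying the $(\Rightarrow)$ direction of Theorem \ref{TheoremDefnLocalizes} to the local triple $(R_\bm, \Delta|_{\Spec R_\bm}, \ba_\bm^t)$ yields that $(R_Q, \Delta|_{\Spec R_Q}, \ba_Q^t)$ is locally strongly $F$-regular (respectively, locally sharply $F$-pure). Since this holds for every $Q \in \Spec R$, a second application of Theorem \ref{TheoremDefnLocalizes}, this time to $R$, shows that $(R, \Delta, \ba^t)$ is locally strongly $F$-regular (respectively, locally sharply $F$-pure).

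The only mildly delicate point is the bridge from ``maximal'' to ``all'' primes in the reverse direction; this is handled by the observation that localization at a non-maximal prime factors through localization at a maximal ideal containing it, so no fresh argument is needed beyond the two main results already in hand.
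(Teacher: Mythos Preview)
Your proof is correct and follows exactly the route the paper intends: the corollary is stated without proof because it is meant to be an immediate consequence of Theorem \ref{TheoremDefnLocalizes} together with Proposition \ref{PropSameInLocalCase}, and you have spelled out precisely that combination. Your extra care in bridging from maximal ideals to all primes (via $R_Q = (R_\bm)_{QR_\bm}$) is a clean way to handle the quantifier mismatch, though one could alternatively note that the $(\Leftarrow)$ argument in Theorem \ref{TheoremDefnLocalizes} goes through verbatim using only maximal ideals, since surjectivity of a map of $R$-modules is detected on maximal ideals.
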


We now recall the definition of the test ideal.

\begin{definition}
\label{DefnTestIdeal} \cite{TakagiInterpretationOfMultiplierIdeals}, \cite{HochsterFoundations}, \cite{SchwedeCentersOfFPurity}
Let $X = \Spec R$ be an $F$-finite normal integral affine scheme.  Further suppose that $\Delta$ is an effective $\bQ$-divisor on $X$, $\ba \neq (0)$ is an ideal of $R$ and $t \geq 0$ is a real number.  We define the \emph{big test ideal}\footnote{The big test ideal is often called the non-finitistic test ideal and is often denoted by $\tld \tau(R; \Delta, \ba^t)$.} $\tau_b(R; \Delta, \ba^t)$ of the triple $(R, \Delta, \ba^t)$ to be the unique smallest non-zero ideal $J$ of $R$ such that
\begin{equation}
\label{EqnFCompatibleIdeal}
 \phi \left(\Frobp{\ba^{\lceil t(p^e - 1) \rceil} J}{e} \right) \subseteq J
\end{equation}
for all $e \geq 0$ and all
\[
\phi \in \Image\Bigg(\Hom_R\left( \Frobp{R(\lceil (p^e - 1)\Delta \rceil)}{e}, R\right) \rightarrow \Hom_R(\Frob{R}{e}, R) \Bigg).
\]
This ideal always exists in the context described.
\end{definition}

\begin{remark}
\label{RemarkTestElementDefinition}
Assume that $0 \neq c \in \tau_b(R; \Delta, \ba^t)$ (in other words, $c$ is a \emph{big sharp test element}).  Then
\[
\tau_b(R; \Delta, \ba^t) = \sum_{e \geq 0} \sum_{\phi} \phi(\Frobp{c \ba^{\lceil t(p^e - 1) \rceil}}{e}),
\]
where the inner sum is over
\[
\phi \in \Image\Bigg(\Hom_R\left( \Frobp{R(\lceil (p^e - 1)\Delta \rceil)}{e}, R\right) \rightarrow \Hom_R(\Frob{R}{e}, R) \Bigg).
\]
It is clear that the sum on the right satisfies the condition from Equation \ref{EqnFCompatibleIdeal}.  It is also easy to see that the sum on the right is non-zero (consider the case where $e = 0$).  Thus the containment $\subseteq$ is clear.  But the containment $\supseteq$ is also easy since $cR \subseteq \tau_b(R; \Delta, \ba^t)$ and again using Equation \ref{EqnFCompatibleIdeal}.  Thus the statement is proven.
\end{remark}

For more discussion on the big test ideal in this context, see \cite[Section 3]{BlickleSchwedeTakagiZhangDiscAndRat} or \cite[Subsection 2.2]{SchwedeCentersOfFPurity} and compare with  \cite{HaraYoshidaGeneralizationOfTightClosure}, \cite{TakagiInterpretationOfMultiplierIdeals}, and \cite{LyubeznikSmithCommutationOfTestIdealWithLocalization}.

\begin{corollary}
\label{CorTestIdealCharOfLocally}
 The big test ideal  $\tau_b(R; \Delta, \ba^t)$ is equal to $R$ if and only if the triple $(R, \Delta, \ba^t)$ is locally strongly $F$-regular.
\end{corollary}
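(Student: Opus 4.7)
The plan is to prove both implications, with the easy direction $(\Leftarrow)$ handled directly from the defining $F$-compatibility of $\tau_b$, and the harder direction $(\Rightarrow)$ handled by reduction to the local case via Theorem \ref{TheoremDefnLocalizes} and Proposition \ref{PropSameInLocalCase}.

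For the easy direction, I would first observe that $\tau_b$ is a nonzero ideal of the normal domain $R$, hence meets $R^{\circ}$. Picking $d \in \tau_b \cap R^{\circ}$ and applying the locally strongly $F$-regular hypothesis to this $d$ produces $\phi \in \Image(\pi_{\Delta,e}) \cdot (d \ba^{\lceil t(p^e-1) \rceil})^{1/p^e}$ with $\phi(1)=1$. Expanding $\phi$ as a finite sum of elementary maps $\phi_j((d a_j)^{1/p^e}\blank)$ and using the $F$-compatibility Equation \ref{EqnFCompatibleIdeal} for $J = \tau_b$ together with $d \in \tau_b$, each image $\phi_j((d a_j)^{1/p^e})$ lies in $\tau_b$, so $1 \in \tau_b$ and $\tau_b = R$.

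For the hard direction, assuming $\tau_b = R$, I would fix $d \in R^{\circ}$ and apply Remark \ref{RemarkTestElementDefinition} with $c = d$ (permissible since $d \in R = \tau_b$). This yields a finite identity $1 = \sum_i \phi_i((d a_i)^{1/p^{e_i}})$ where the $\phi_i \in \Image(\pi_{\Delta,e_i})$ live at potentially different Frobenius levels $e_i$ and $a_i \in \ba^{\lceil t(p^{e_i}-1) \rceil}$. The key move is to localize at an arbitrary $Q \in \Spec R$: in the local ring $R_Q$ one summand must be a unit $u$, and rescaling the corresponding $(\phi_i)_Q$ by $u^{-1}$ realizes Takagi's original Definition \ref{DefnOldStrongFReg} (extended to triples) for $(R_Q, \Delta|_{\Spec R_Q}, \ba_Q^t)$. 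Proposition \ref{PropSameInLocalCase} then identifies this with the locally strongly $F$-regular condition over $R_Q$, and Theorem \ref{TheoremDefnLocalizes} reassembles the stalkwise conclusion into the global statement that $(R,\Delta,\ba^t)$ is locally strongly $F$-regular.

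The main obstacle is the single-level consolidation in $(\Rightarrow)$: the identity from Remark \ref{RemarkTestElementDefinition} mixes maps at several $e_i$, whereas locally strongly $F$-regular demands a common $e$. Trying to bump each $\phi_i$ up to a common level via Lemma \ref{LemmaCompositionIsOk} would require a sharp-$F$-pure auxiliary splitting that is not available a priori, so this route stalls globally. Localization sidesteps the problem entirely: in a local ring a unit cannot be written as a sum of nonunits, so one summand alone supplies a single-level witness at every stalk, which by Theorem \ref{TheoremDefnLocalizes} is all one needs to conclude.
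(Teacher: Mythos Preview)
Your proof is correct and close in spirit to the paper's, but packaged differently. The paper first reduces entirely to the local case by invoking that $\tau_b$ commutes with localization, and then handles both implications at once via the characterization $R = \tau_b \Longleftrightarrow 1 \in \sum_{e}\sum_{\phi}\phi\bigl((d\ba^{\lceil t(p^e-1)\rceil})^{1/p^e}\bigr)$, together with the observation that in a local ring $1$ lies in such a sum if and only if some single summand is a unit. You instead treat the easy direction by a clean global $F$-compatibility argument (never localizing), and for the hard direction write the global identity from Remark~\ref{RemarkTestElementDefinition} first and only localize to extract the unit summand, then reassemble via Proposition~\ref{PropSameInLocalCase} and Theorem~\ref{TheoremDefnLocalizes}; this has the pleasant feature that you never appeal to the commutation of $\tau_b$ with localization. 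One expository wrinkle: when you say the unit summand ``realizes Takagi's original Definition~\ref{DefnOldStrongFReg} for $(R_Q,\Delta|_{\Spec R_Q},\ba_Q^t)$'', you have only verified the splitting condition for the single $d$ you fixed; to conclude strong $F$-regularity at $Q$ you must let $d$ range over $R^{\circ}$ (which suffices, since every $d' \in R_Q^{\circ}$ is a unit times some $d/1$), so the quantifiers should be reordered with $Q$ outermost.
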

\begin{proof}
Since the formation of the big test ideal commutes with localization, see \cite[Section 3]{BlickleSchwedeTakagiZhangDiscAndRat} and \cite[Lemma 2.1]{HaraTakagiOnAGeneralizationOfTestIdeals}, it is sufficient to prove Corollary \ref{CorTestIdealCharOfLocally} at each maximal ideal.  Therefore, we can reduce to the case of a local ring $(R, \bm)$.  Then the proof is exactly the same as in \cite[Lemma 2.3]{TakagiInterpretationOfMultiplierIdeals}, or \cite[Proposition 2.1]{HaraFRegAndFPureGradedRings}.  We sketch another approach where we avoid Matlis duality and instead use a version of \cite[Lemma 2.1]{HaraTakagiOnAGeneralizationOfTestIdeals} generalized to triples $(R, \Delta, \ba^t)$.

 Now, still assuming $(R, \bm)$ is local, $R = \tau_b(R; \Delta, \ba^t)$ if and only if for each $d \in R^{\circ}$ we have
 \[
 1 \in \sum_{e\geq 0} \left( \sum_{\phi_e \in \Image(\pi_{\Delta, e}) } \phi_e\left( (d \ba^{\lceil t(p^e - 1) \rceil})^{1/p^e} \right)  \right),
 \]
see Remark \ref{RemarkTestElementDefinition}, \cite[Lemma 2.1]{HaraTakagiOnAGeneralizationOfTestIdeals}, \cite[Lemma 3.5]{TakagiPLTAdjoint} and \cite[Lemma 2.20]{SchwedeCentersOfFPurity}.  But then the statement is obvious since $1$ is in the sum if and only if there are terms in the sum not contained in $\bm$.
\end{proof}

\section{$F$-pure thresholds, test ideals, and uniformly $F$-compatible ideals}
\label{SectionThresholdsTestIdealsUniformlyFCompatibleIdeals}

In this section we discuss how these new notions of strong $F$-regularity and sharp $F$-purity fit into the existing theory.  We also correct a small error of the current author, in the paper \cite{SchwedeCentersOfFPurity}.

\subsection{The $F$-pure threshold}

Recall that the \emph{$F$-pure threshold} of a pair $(R, \ba)$, where $R$ is a reduced $F$-finite $F$-pure (not necessarily local) ring is defined to be
\[
\begin{split}
c(\ba) = \sup\{ s \in \bR_{\geq 0} | \text{ the pair $(R, \ba^s)$ is $F$-pure} \}\text{ } \\= \sup\{ s \in \bR_{\geq 0} | \text{ the pair $(R, \ba^s)$ is sharply $F$-pure} \},
\end{split}
\]
see \cite[Definition 2.1]{TakagiWatanabeFPureThresh} and \cite[Proposition 5.3]{SchwedeSharpTestElements}.  This definition was stated originally for non-local rings, but we expect that a better definition would require that $(R, \ba^s)$ is locally sharply $F$-pure.  Note, most results about the $F$-pure threshold were shown in the case that $R$ is local.  However, there is the following notable exception:

\begin{remark}
The rationality result for the $F$-pure threshold found in \cite[Theorem 3.1]{BlickleMustataSmithDiscretenessAndRationalityOfFThresholds} or \cite{BlickleSchwedeTakagiZhangDiscAndRat} (at least when $R$ is strongly $F$-regular), is a rationality result for the \emph{locally-$F$-pure threshold}.  Note that in \cite[Theorem 3.1]{BlickleMustataSmithDiscretenessAndRationalityOfFThresholds}, it is assumed that $R$ is regular, but it is not assumed that $\Hom_R(R^{1/p^e}, R)$ is free as an $R^{1/p^e}$-module (though it is locally free).
\end{remark}

\subsection{The test ideal of a locally sharply $F$-pure pair}

We turn our attention again to test ideals.  One nice fact about sharply $F$-pure pairs is that the associated generalized test ideal (of \cite{HaraYoshidaGeneralizationOfTightClosure}) is a radical ideal.  We now show directly that this also holds for local sharp $F$-purity.

\begin{proposition} \cite[Corollary 3.15]{SchwedeSharpTestElements}
If $(R, \ba^t)$ is locally sharply $F$-pure, then the test ideal $\tau(R, \ba^t)$ (as defined in \cite{HaraYoshidaGeneralizationOfTightClosure}) is a radical ideal.
\end{proposition}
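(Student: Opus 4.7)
The plan is to prove the contrapositive of non-radicality: I will show that every $f \in R$ with $f^N \in \tau(R, \ba^t)$ for some $N \geq 1$ already lies in $\tau(R, \ba^t)$. The first ingredient is to invoke local sharp $F$-purity together with Remark \ref{RemarkComposingIsOk}, which produces for every integer $n \geq 1$ a map
\[
\phi_n \in \Image(\pi_{0, ne}) \cdot \left(\ba^{\lceil t(p^{ne} - 1)\rceil}\right)^{1/p^{ne}}
\]
with $\phi_n(1) = 1$, where $e$ is the integer supplied by the locally sharply $F$-pure hypothesis. By Remark \ref{RemTheMapsAtLeastGenerate}, such a $\phi_n$ can be written as a finite sum $\sum_i \psi_i\bigl(a_i^{1/p^{ne}} \cdot \blank\bigr)$ with $\psi_i \in \Hom_R(R^{1/p^{ne}}, R)$ and $a_i \in \ba^{\lceil t(p^{ne}-1)\rceil}$. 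Because evaluation is additive, it suffices to trace each summand through the argument.

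Next, I would choose $n$ large enough that $p^{ne} \geq N$; then $f^{p^{ne}} = f^N \cdot f^{p^{ne} - N}$ lies in $\tau(R, \ba^t)$, and one has the identity
\[
\psi_i\bigl(a_i^{1/p^{ne}} \cdot f\bigr) = \psi_i\bigl((a_i f^{p^{ne}})^{1/p^{ne}}\bigr).
\]
The element $a_i f^{p^{ne}}$ lies in $\ba^{\lceil t(p^{ne} - 1)\rceil} \cdot \tau(R, \ba^t)$, so the $F$-compatibility enjoyed by the test ideal (the analogue of Equation \ref{EqnFCompatibleIdeal}) forces the right-hand side to lie in $\tau(R, \ba^t)$. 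Summing over $i$ gives $\phi_n(f) \in \tau(R, \ba^t)$. But $R$-linearity of $\phi_n$ gives $\phi_n(f) = f \cdot \phi_n(1) = f$, and hence $f \in \tau(R, \ba^t)$, as required.

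The two pivotal ingredients are therefore (i) the ability to iterate sharp $F$-purity through Remark \ref{RemarkComposingIsOk} so that the Frobenius exponent $p^{ne}$ can be made to dominate any preassigned $N$, and (ii) the standard $F$-compatibility of the test ideal under maps lying in $\Image(\pi_{0,e}) \cdot \left(\ba^{\lceil t(p^{e}-1)\rceil}\right)^{1/p^{e}}$. The only delicate point, and the main obstacle I foresee, is to confirm that the required $F$-compatibility holds for the Hara--Yoshida \emph{finitistic} test ideal $\tau(R, \ba^t)$ and not merely for the big test ideal $\tau_b(R, \ba^t)$ of Definition \ref{DefnTestIdeal}; in the locally sharply $F$-pure setting the two are known to share this compatibility (indeed to coincide in many common situations), which is exactly what makes the argument above go through.
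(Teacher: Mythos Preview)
Your argument is correct and its computational core---writing a surjective $\phi$ as a sum $\sum_i \psi_i(a_i^{1/p^{ne}}\cdot\blank)$, applying it to $f$, and using $f = (f^{p^{ne}})^{1/p^{ne}}$---is exactly the calculation the paper carries out. The packaging differs: the paper first proves the more general Lemma~\ref{LemmaFrobeniusClosure} (that $I^{F^{\sharp}\ba^t}=I$ for \emph{every} ideal $I$ under local sharp $F$-purity) and then simply cites that the argument of \cite[Corollary~3.15]{SchwedeSharpTestElements} goes through verbatim, whereas you bypass sharp Frobenius closure and appeal directly to the uniform $(\ba^t,F)$-compatibility of $\tau(R,\ba^t)$. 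Your route is more direct for this single statement; the paper's route gives a reusable lemma. The concern you flag at the end is legitimate: Equation~\ref{EqnFCompatibleIdeal} is stated for the \emph{big} test ideal, so you do need the analogous compatibility for the Hara--Yoshida finitistic test ideal. This is indeed known (it follows from the definition of $\ba^t$-tight closure by a standard push-forward argument), but you should cite or prove it rather than leave it as a parenthetical; alternatively, observe that the paper's detour through $I^{F^{\sharp}\ba^t}=I$ is precisely how one avoids invoking that compatibility explicitly.
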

\begin{proof}
The proof is identical to the proof of \cite[Corollary 3.15]{SchwedeSharpTestElements} once one has Lemma \ref{LemmaFrobeniusClosure}, which we prove below.
\end{proof}

First we recall the following definition.

\begin{definition}
Given an ideal $I \subseteq R$, we define the \emph{$\ba^t$-sharp Frobenius closure of $I$}, denoted $I^{F^{\sharp}\ba^t}$ as follows.  The ideal $I^{F^{\sharp} \ba^t}$ is defined to be the set of elements $z \in R$ such that $\ba^{\lceil t(p^e - 1) \rceil} z^{p^e} \subseteq I^{[p^e]}$ for all $ e \gg 0$.
\end{definition}

\begin{lemma} \cite[Remark 3.11]{SchwedeSharpTestElements}
\label{LemmaFrobeniusClosure}
If $(R, \ba^t)$ is locally sharply $F$-pure, then $I^{F^{\sharp} \ba^t} = I$ for all ideals $I$.
\end{lemma}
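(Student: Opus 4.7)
The trivial inclusion $I \subseteq I^{F^{\sharp}\ba^t}$ is immediate from the definition. For the reverse, fix $z \in I^{F^{\sharp}\ba^t}$ and choose $e_0$ so that $\ba^{\lceil t(p^e - 1) \rceil} z^{p^e} \subseteq I^{[p^e]}$ for all $e \geq e_0$. My strategy is to exhibit $z$ as the value at $z$ of an $R$-linear splitting-type map built from the local sharp $F$-purity data, and then observe that each summand of that value lies in $I$ by virtue of the sharp Frobenius closure containment.

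By Lemma \ref{LemmaEquivDefns}(c$'$), local sharp $F$-purity provides some $e_1 > 0$ and a map $\phi \in \Hom_R(R^{1/p^{e_1}}, R) \cdot (\ba^{\lceil t(p^{e_1}-1) \rceil})^{1/p^{e_1}}$ with $\phi(1) = 1$. Remark \ref{RemarkComposingIsOk} (an application of Lemma \ref{LemmaCompositionIsOk}) upgrades this to maps
\[
\Phi_n \in \Hom_R(R^{1/p^{ne_1}}, R) \cdot (\ba^{\lceil t(p^{ne_1}-1) \rceil})^{1/p^{ne_1}}
\]
with $\Phi_n(1) = 1$ for every $n \geq 1$. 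I then choose $n$ large enough that $e := n e_1 \geq e_0$, so the sharp Frobenius closure containment $\ba^{\lceil t(p^e-1) \rceil} z^{p^e} \subseteq I^{[p^e]}$ is in force at level $e$.

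Next I invoke Remark \ref{RemTheMapsAtLeastGenerate} to write $\Phi_n = \sum_i \psi_i(a_i^{1/p^e}\,\blank)$ with $\psi_i \in \Hom_R(R^{1/p^e}, R)$ and $a_i \in \ba^{\lceil t(p^e-1) \rceil}$. Because $\Phi_n$ is $R$-linear with $\Phi_n(1) = 1$, evaluating at $z \in R$ yields
\[
z \;=\; \Phi_n(z) \;=\; \sum_i \psi_i(a_i^{1/p^e} z) \;=\; \sum_i \psi_i\bigl((a_i z^{p^e})^{1/p^e}\bigr).
\]
By the choice of $e$ each $a_i z^{p^e}$ lies in $I^{[p^e]}$, and the additivity of Frobenius in characteristic $p$ identifies $(I^{[p^e]})^{1/p^e}$ with $I \cdot R^{1/p^e}$ inside $R^{1/p^e}$ (writing $a_i z^{p^e} = \sum_j r_j f_j^{p^e}$ for generators $f_j$ of $I$, one has $(a_i z^{p^e})^{1/p^e} = \sum_j r_j^{1/p^e} f_j$). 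Hence $(a_i z^{p^e})^{1/p^e} \in I \cdot R^{1/p^e}$, so $\psi_i\bigl((a_i z^{p^e})^{1/p^e}\bigr) \in I$ by $R$-linearity of $\psi_i$. Summing over $i$ gives $z \in I$.

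The only delicate point is the passage from the fixed-$e_1$ existence statement of local sharp $F$-purity to a splitting at an arbitrarily large $e$; this is precisely what Lemma \ref{LemmaCompositionIsOk} (packaged as Remark \ref{RemarkComposingIsOk}) supplies. Once $e \geq e_0$ is achieved, the remainder is a routine pairing of Remark \ref{RemTheMapsAtLeastGenerate} with the hypothesis on $z$.
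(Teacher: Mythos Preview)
Your proof is correct and follows essentially the same route as the paper's: fix $z\in I^{F^{\sharp}\ba^t}$, use Lemma~\ref{LemmaCompositionIsOk}/Remark~\ref{RemarkComposingIsOk} to obtain a splitting $\Phi$ at a level $e\geq e_0$, decompose $\Phi=\sum_i\psi_i\cdot a_i^{1/p^e}$, and then compute $z=\Phi((z^{p^e})^{1/p^e})=\sum_i\psi_i((a_i z^{p^e})^{1/p^e})\in\sum_i\psi_i((I^{[p^e]})^{1/p^e})\subseteq I$. The only cosmetic point is that Remark~\ref{RemarkComposingIsOk} literally yields $1\in\Phi_n(R^{1/p^{ne_1}})$ rather than $\Phi_n(1)=1$; but either invoke Lemma~\ref{LemmaEquivDefns} once more, or note that the iterated composition of a $\phi$ with $\phi(1)=1$ again sends $1$ to $1$.
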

\begin{proof}
Suppose that $z \in I^{F^{\sharp} \ba^t}$.  Thus there exists $e_0 > 0$ such that $\ba^{\lceil t(p^e - 1) \rceil} z^{p^e} \subseteq I^{[p^e]}$ for all $ e \geq e_0$.
Since $(R, \ba^t)$ is locally sharply $F$-pure, there exists a $\phi \in \Hom_R(R^{1/p^e}, R) \cdot \left( \ba^{\lceil t(p^e - 1) \rceil}\right)^{1/p^{e}}$, for some $e \geq e_0$, such that $\phi(1) = 1$ (we can increase $e$ due to Lemma \ref{LemmaCompositionIsOk}).  We then write $\phi = \phi_1 \cdot a_1^{1/p^e} + \dots + \phi_m \cdot a_m^{1/p^e}$ for $\phi_i \in \Hom_R(R^{1/p^e}, R)$ and $a_i \in \ba^{\lceil t(p^e - 1) \rceil}$.  Then for that same $e \geq e_0$,
\[
\begin{split}
z = \phi((z^{p^e})^{1/p^e}) \\
= \sum_{i = 1}^m \phi_i\left((a_i z^{p^e})^{1/p^e}\right) \\
 \subseteq \sum_{i = 1}^m \phi_i\left( \left( \ba^{\lceil t(p^e - 1) \rceil} z^{p^e} \right)^{1/p^e}\right) \\
 \subseteq \sum_{i = 1}^m \phi_i\left( \left(I^{[p^e]} \right)^{1/p^e} \right) \\
 \subseteq I.
\end{split}
\]
\end{proof}

\subsection{Uniformly $F$-compatible ideals and centers of $F$-purity}

We begin by recalling the definition of a uniformly $F$-compatible ideal.

\begin{definition}
Suppose that $(R, \Delta, \ba^t)$ is a triple.  Recall that an ideal $J \subseteq R$ is called \emph{uniformly $(\Delta, \ba^t, F)$-compatible} if for all $\phi \in \Image\left( \pi_{\Delta,e} \right) \subseteq \Hom_R(R^{1/p^e}, R)$ and all $a \in \ba^{\lceil t(p^e - 1) \rceil}$, we have that
\[
\phi( (aJ)^{1/p^e}) \subseteq J.
\]
A prime uniformly $(\Delta, \ba^t, F)$-compatible ideal is called a \emph{center of $F$-purity for $(R, \Delta, \ba^t)$}.
\end{definition}

\begin{remark}
In \cite{SchwedeCentersOfFPurity}, the author actually dealt with triples of the form $(R, \Delta, \ba_{\bullet})$ where $\ba_{\bullet}$ is a graded system of ideals (that is, $\ba_i \cdot \ba_j \subseteq \ba_{i+j}$).  For simplicity, we won't work with graded systems of ideals, although none of the results are more difficult in that generality.
\end{remark}

\begin{remark}
An ideal $J$ is uniformly $(\Delta, \ba^t, F)$-compatible if and only if for all
\[
\phi \in \Image\left( \pi_{\Delta,e} \right) \cdot \left(\ba^{\lceil t(p^e - 1) \rceil}\right)^{1/p^e},
\]
we have that
\[
\phi(J^{1/p^e}) \subseteq J.
\]
This can be seen because maps of the form $\phi \cdot a^{1/p^e}$ for $\phi \in \Image\left( \pi_{\Delta,e} \right)$ and $a \in \ba^{\lceil t(p^e - 1) \rceil}$ generate $\Image\left( \pi_{\Delta,e} \right) \cdot \left(\ba^{\lceil t(p^e - 1) \rceil}\right)^{1/p^e}$ even as an $R$-module.  In other words, the definition of uniformly $F$-compatible ideals is the same under the paradigm of ``\emph{local} strong $F$-regularity/ \emph{local} sharp $F$-purity'' as it is under the paradigm of ``strong $F$-regularity / sharp $F$-purity''
\end{remark}

\begin{remark}
\label{RemErrorCorrection}
We now point out a misstatement in the current author's paper \cite[Corollary 4.6]{SchwedeCentersOfFPurity}.  In that paper, the author claimed that in the case of an $F$-finite normal domain $R$, a triple $(R, \Delta, \ba^t)$ is strongly $F$-regular if and only if $(R, \Delta, \ba^t)$ has no proper nontrivial centers of $F$-purity.  To prove this, the author showed (correctly) that a minimal prime of the non-strongly $F$-regular locus\footnote{The non-strongly $F$-regular locus still makes sense using the old definition of a strongly $F$-regular pair or triple.  It is the set of primes $Q \in \Spec R$ such that $(R_Q, \Delta|_{\Spec R_Q}, \ba^t_Q)$ is not strongly $F$-regular.  This is a closed set because if $(R, \Delta, \ba^t)$ is strongly $F$-regular at $Q \in \Spec R$, then it is also strongly $F$-regular (and locally strongly $F$-regular) in a neighborhood of $Q$.} was a center of $F$-purity.

Unfortunately, the non-strongly $F$-regular locus being empty is equivalent to a triple $(R, \Delta, \ba^t)$ being \emph{locally} strongly $F$-regular (which we expect is a different condition than being strongly $F$-regular).
Therefore, a correct statement, in the case of an $F$-finite normal domain, would be one of the following.
\begin{itemize}
 \item[(1)]  A triple $(R, \Delta, \ba^t)$ has empty non-strongly $F$-regular locus if and only if $(R, \Delta, \ba^t)$ has no proper non-trivial centers of $F$-purity.
 \item[(2)]  A triple $(R, \Delta, \ba^t)$ is locally strongly $F$-regular if and only if $(R, \Delta, \ba^t)$ has no proper non-trivial centers of $F$-purity.
 \item[(3)]  The big test ideal $\tau_b(R; \Delta, \ba^t) = R$ if and only if $(R, \Delta, \ba^t)$ has no proper non-trivial centers of $F$-purity.
\end{itemize}

On the other hand, the only place where \cite[Corollary 4.6]{SchwedeCentersOfFPurity} was applied in the paper \cite{SchwedeCentersOfFPurity}, was in a case where $\Delta = 0$ and $\ba = R$, see \cite[Corollary 7.8]{SchwedeCentersOfFPurity}.
\end{remark}

Perhaps more importantly, all the results of \cite{SchwedeCentersOfFPurity} for sharply $F$-pure triples extend to ``locally sharply $F$-pure triples''.  For the most part, these generalizations can be accomplished by reducing to the local case where the two notions of strong $F$-regularity (respectively, sharp $F$-purity) agree.  However, the most fundamental such result is the following and we prove it directly:

\begin{proposition}\cite[Corollary 3.3]{SchwedeCentersOfFPurity}
If $(R, \Delta, \ba^t)$ is locally sharply $F$-pure and $I$ is uniformly $(\Delta, \ba^t, F)$-compatible, then $R/I$ is also $F$-pure.
\end{proposition}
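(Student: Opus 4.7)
The plan is to produce an $F$-splitting of $R/I$ by descending a single map furnished by local sharp $F$-purity, using the characterization of uniformly $F$-compatible ideals recorded just above the proposition.

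First I would apply Lemma \ref{LemmaEquivDefns}(c$'$): local sharp $F$-purity of $(R, \Delta, \ba^t)$ yields an integer $e > 0$ together with a map
\[
\phi \in \Image\left(\pi_{\Delta,e}\right) \cdot \left(\ba^{\lceil t(p^e - 1) \rceil}\right)^{1/p^e}
\]
satisfying $\phi(1) = 1$. Next I would invoke the ``even as an $R$-module'' remark on uniformly $F$-compatible ideals placed immediately before this proposition: since $I$ is uniformly $(\Delta, \ba^t, F)$-compatible, every element of the displayed submodule sends $I^{1/p^e}$ into $I$. In particular $\phi$ does, so $\phi$ descends to an $R/I$-linear map
\[
\bar\phi : R^{1/p^e}/I^{1/p^e} \longrightarrow R/I
\]
with $\bar\phi(1) = 1$. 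The natural ring map $R/I \to R^{1/p^e}/I^{1/p^e}$ (obtained by reducing $R \hookrightarrow R^{1/p^e}$ modulo $I$ and $I^{1/p^e}$ respectively) is precisely the $e$-th iterated Frobenius on $R/I$, and the existence of the retraction $\bar\phi$ means that this iterated Frobenius is $R/I$-split.

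Finally, to pass from a splitting at level $e$ to an honest $F$-splitting, I would precompose $\bar\phi$ with the natural $R/I$-linear map $R^{1/p}/I^{1/p} \to R^{1/p^e}/I^{1/p^e}$ induced by $R^{1/p} \hookrightarrow R^{1/p^e}$. The resulting composition is an $R/I$-linear retraction of the Frobenius $R/I \to R^{1/p}/I^{1/p}$, which is exactly the statement that $R/I$ is $F$-pure (and forces $R/I$ to be reduced a posteriori, since Frobenius on $R/I$ is then injective).

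No step is a genuine obstacle: once Lemma \ref{LemmaEquivDefns} and the ``$R$-module generation'' characterization of uniform compatibility are invoked, the argument is a direct descent. The only mild wrinkle is that the exponent $e$ witnessing local sharp $F$-purity need not equal $1$, which is handled cleanly by the last composition step.
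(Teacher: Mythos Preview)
Your argument is correct and follows the same route as the paper: pick $\phi$ with $\phi(1)=1$, use uniform $(\Delta,\ba^t,F)$-compatibility to see that $\phi(I^{1/p^e})\subseteq I$, and pass to the induced surjection $\overline\phi:(R/I)^{1/p^e}\to R/I$ sending $1$ to $1$. The paper simply stops there, leaving implicit the routine reduction from a splitting at level $e$ to one at level $1$ that you spell out in your final paragraph.
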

\begin{proof}
Choose $\phi \in \Image\left( \pi_{\Delta,e} \right) \cdot \left(\ba^{\lceil t(p^e - 1) \rceil}\right)^{1/p^e}$ such that $\phi(1) = 1$.  Consider the diagram:
\[
\xymatrix@R=18pt{
I^{1/p^e} \ar@{^{(}->}[d] \ar[r]^{\phi|_{I^{1/p^e}}} & I \ar@{^{(}->}[d]\\
R^{1/p^e} \ar@{->>}[d] \ar[r]^{\phi} & R \ar@{->>}[d]\\
(R/I)^{1/p^e} \ar[r]^{\overline \phi} & R/I.
}
\]
Since $\phi(1) = 1$, we also see that $\overline \phi(1) = 1$.  This completes the proof.
\end{proof}

\subsection{Links with Kawamata log terminal singularities}

We finally note that the new Definition of strongly $F$-regular, provided in this paper, is the notion that corresponds to Kawamata log terminal singularities when $R$ is not necessarily local.

\begin{proposition} \cite{HaraRatImpliesFRat}, \cite[Theorem 3.2]{TakagiInterpretationOfMultiplierIdeals}, \cite[Theorem 6.8]{HaraYoshidaGeneralizationOfTightClosure}, \cite[Theorem 2.5]{TakagiPLTAdjoint}
\label{PropKLTImpliesLocallyFReg}
Let $(R, \Delta, \ba^t)$ be a triple that was reduced to characteristic $p \gg 0$ from a characteristic zero Kawamata log terminal triple\footnote{Note that this implies that $K_{R_0} + \Delta_0$ was $\bQ$-Cartier in characteristic zero.} (reduced with a log resolution, etc).  Then $(R, \Delta, \ba^t)$ is locally strongly $F$-regular.
\end{proposition}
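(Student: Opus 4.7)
The plan is to reduce to the classical local setting, where the cited works of Hara, Takagi, and Hara--Yoshida already establish that the reduction modulo $p \gg 0$ of a local Kawamata log terminal triple is strongly $F$-regular in the sense of Definition \ref{DefnOldStrongFReg}. The bridge to the non-local statement is Corollary \ref{CorNewAndOldAreRelated}: the triple $(R, \Delta, \ba^t)$ is locally strongly $F$-regular if and only if $(R_\bm, \Delta|_{\Spec R_\bm}, \ba_\bm^t)$ is strongly $F$-regular in the old sense for every maximal ideal $\bm \subseteq R$, where by Proposition \ref{PropSameInLocalCase} the old and new definitions coincide in the local setting.

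First I would fix a maximal ideal $\bm \in \Spec R$ arising from some maximal ideal $\bm_0 \in \Spec R_0$ of the characteristic zero model $(R_0, \Delta_0, \ba_0^t)$. Since Kawamata log terminal is preserved under localization, the localized triple $(R_{0, \bm_0}, \Delta_0|_{\Spec R_{0, \bm_0}}, \ba_{0, \bm_0}^t)$ is again KLT, and the cited local results then supply, for $p$ sufficiently large, the surjective map witnessing strong $F$-regularity of its mod-$p$ reduction $(R_\bm, \Delta|_{\Spec R_\bm}, \ba_\bm^t)$ in Takagi's original sense. Combined with Corollary \ref{CorNewAndOldAreRelated}, this yields the conclusion.

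The main obstacle will be uniformity in $p$: the classical theorems a priori produce a bound $p \gg 0$ depending on the maximal ideal $\bm$, while the hypothesis only provides a single global bound. I would handle this using quasi-compactness of $\Spec R_0$: reduction mod $p$ starts from finitely generated data (a model of finite type over $\bZ$, finitely many coefficients in $\Delta_0$, finitely many generators of $\ba_0$, and crucially a single log resolution of the ambient characteristic zero triple). One can therefore pass to a finite affine cover of $\Spec R_0$ on which the geometric data are controlled and invoke the local results on each piece with a common $p$.

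A cleaner route, which I would actually prefer in a clean write-up, is to invoke Corollary \ref{CorTestIdealCharOfLocally} directly. Applied globally (rather than stalk-by-stalk) using the fixed log resolution of the characteristic zero triple, the cited results identify, for $p \gg 0$, the big test ideal $\tau_b(R; \Delta, \ba^t)$ with the mod-$p$ reduction of the multiplier ideal $\mathcal{J}(R_0; \Delta_0, \ba_0^t)$. The KLT hypothesis forces $\mathcal{J}(R_0; \Delta_0, \ba_0^t) = R_0$, so $\tau_b(R; \Delta, \ba^t) = R$, and Corollary \ref{CorTestIdealCharOfLocally} concludes that $(R, \Delta, \ba^t)$ is locally strongly $F$-regular.
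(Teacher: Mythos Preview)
Your proposal is correct, and your first route is essentially the paper's own argument: reduce to the local case via Corollary~\ref{CorNewAndOldAreRelated}, identify uniformity in $p$ as the only obstacle, and then argue that a single choice of $p \gg 0$ suffices for all points. The paper is slightly more specific about the mechanism behind this uniformity: rather than appealing to quasi-compactness and finiteness of the reduction data in general terms, it points directly to the fact that the ``key injectivity'' in \cite[Subsections~4.4,~4.5]{HaraRatImpliesFRat} already holds at every local ring of a ring reduced to characteristic $p \gg 0$, so the local strong $F$-regularity arguments of \cite{TakagiPLTAdjoint, TakagiInterpretationOfMultiplierIdeals, HaraYoshidaGeneralizationOfTightClosure} go through simultaneously at every prime. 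Your observation that a single global log resolution is being spread out is exactly the right intuition behind why that injectivity is uniform.

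Your second route, through Corollary~\ref{CorTestIdealCharOfLocally} and the identification of the big test ideal with the reduction of the multiplier ideal, is a genuinely different (and arguably cleaner) argument that the paper does not take. It has the advantage of bypassing the stalk-by-stalk bookkeeping entirely: once $\tau_b(R;\Delta,\ba^t)$ coincides with the reduction of $\mathcal{J}(R_0;\Delta_0,\ba_0^t)=R_0$, the conclusion is immediate. The cost is that one must cite the global form of the test-ideal/multiplier-ideal comparison, whereas the paper's approach only needs the local statement plus Hara's uniform injectivity. One small phrasing issue: maximal ideals of $R$ do not literally ``arise from'' maximal ideals of the characteristic-zero ring $R_0$; it is cleaner to simply localize $R$ at an arbitrary prime and note that the spread-out log resolution still controls the situation there.
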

\begin{proof}
This statement has typically been stated in the case that $R$ is a local ring (for example, it follows from \cite[Theorem 2.5]{TakagiPLTAdjoint}, also see \cite[Theorem 3.2]{TakagiInterpretationOfMultiplierIdeals} and \cite{HaraYoshidaGeneralizationOfTightClosure}).  A priori, if you change the local ring, you might need to also change the particular characteristic $p \gg 0$ you are working in.  However, the key injectivity needed to prove these results, holds at every local ring of a ring reduced to characteristic $p \gg 0$, see \cite[Subsection 4.4, 4.5]{HaraRatImpliesFRat}.  In particular, if $(R, \Delta, \ba^t)$ is as stated above, then after localizing at each prime $Q \in \Spec R$, $(R, \Delta|_{\Spec R_Q}, \ba_Q^t)$ is strongly $F$-regular.  Proposition \ref{PropKLTImpliesLocallyFReg} follows.
\end{proof}
\vskip 11pt
\hskip -11pt
{\it Acknowledgements: }  The author would like to thank Shunsuke Takagi and the referee for a careful reading and many useful comments on a previous draft of this paper.  He would also like to thank Manuel Blickle, Craig Huneke, Shunsuke Takagi, and Wenliang Zhang for several useful discussions.


\providecommand{\bysame}{\leavevmode\hbox to3em{\hrulefill}\thinspace}
\providecommand{\MR}{\relax\ifhmode\unskip\space\fi MR}
\providecommand{\MRhref}[2]{%
  \href{http://www.ams.org/mathscinet-getitem?mr=#1}{#2}
}
\providecommand{\href}[2]{#2}

\end{document}